\theoremstyle{definition} \newtheorem{lemma}{Lemma}
\theoremstyle{definition} \newtheorem{theorem}[lemma]{Theorem}
\theoremstyle{remark} \newtheorem*{notation}{Notation}
\theoremstyle{remark} \newtheorem*{acknowledgements}{Acknowledgements}
\title{Singular Sets and the Lavrentiev Phenomenon}
\author{Richard Gratwick}
\address{Mathematics Institute, Zeeman Building, University of Warwick, Coventry, CV4 7AL, UK}
\email{r.gratwick@warwick.ac.uk}
\thanks{This research was conducted during PhD study funded by the EPSRC, and this paper was prepared with support from the European Research  Council under the European Union's Seventh Framework Programme (FP/2007-2013) / ERC Grant Agreement n.291497.}
\date{\today}
\begin{document}
\begin{abstract}
We show that non-occurrence of the Lavrentiev phenomenon does not imply that the singular set is small.  Precisely, given a compact Lebesgue null subset $E \subseteq \mathbb{R}$ and an arbitrary superlinearity, there exists a smooth, strictly convex Lagrangian with this superlinear growth, such that all minimizers of the associated variational problem have singular set exactly $E$, but still admit approximation in energy by smooth functions.
\end{abstract}
\maketitle
\section{Introduction}
For a fixed closed bounded interval $[a,b] \subseteq \mathbb{R}$, we consider the problem of minimizing the functional
\begin{equation} 
\label{problem}
\mathscr{L}(u) = \int_a^b L(x, u(x), u'(x))\, dx
\end{equation}
over the class of real-valued absolutely continuous functions $u \in \mathrm{AC}(a,b)$ with fixed boundary conditions, where the function $L = L(x,y,p) \colon \mathbb{R}^3 \to \mathbb{R}$, the {\it Lagrangian}, is a fixed function of class $C^{\infty}$.    The first general existence results were given by~\citet{Tonelli-1915,Tonelli-Fondamenti-2}; these require the assumptions of superlinearity and convexity of $L$ in the variable $p$.   Assuming the stronger condition that $L_{pp} > 0$, he also proved the following {\it partial regularity} theorem: minimizers of~\eqref{problem} are everywhere differentiable (possibly with infinite derivative) and this derivative is continuous as a map into the extended real line.  Thus the {\it singular set} of a minimizer, defined as those points where the derivative is infinite, is closed. Since the minimizer is absolutely continuous, we know immediately that it must also be of Lebesgue measure zero.  A number of versions of Tonelli's partial regularity theorem, under significantly weaker hypotheses than Tonelli's original statement, can be found in the work of~\citet{Clarke-Vinter-1985-regularity,Clarke-Vinter-1985-small,Sychev-1993,Csornyei-etal-2008}, and~\citet{Ferriero-2012,Ferriero-2013}. \citet{Gratwick-Preiss-2011} show that little further improvement is possible.

Tonelli proved conditions guaranteeing that the singular set is empty, i.e.\ that the minimizer is fully regular.  That it can be non-empty given the assumption necessary for partial regularity (i.e.\ $L_{pp} > 0$) was shown by~\citet{Ball-Mizel-1985}, who exhibited examples of minimizers with one-point singular sets.  They also constructed, given an arbitrary closed set of measure zero $E$, a $C^{\infty}$ Lagrangian depending only on $(y,p)$, superlinear in $p$ and with $L_{pp} > 0$, such that the unique minimizer of~\eqref{problem} has singular set precisely $E$.

\citet{Davie-1988} completed this work by constructing, for an arbitrary closed null set $E$, a $C^{\infty}$ Lagrangian $L = L(x,y,p)$, superlinear in $p$ and with $L_{pp} > 0$, such that any minimizer has singular set exactly $E$.  Davie constructs an admissible function $v \in \mathrm{AC}(a,b)$ and a Lagrangian $L$ so that there exists a constant (in his notation) $(8\alpha)^{-1}> 0 $ such that
$
\mathscr{L}(v) < (8 \alpha)^{-1}
$,
but for any admissible function $u \in \mathrm{AC}(a,b)$, if for some $c \in E$ we have that $u'(c)$ exists and is finite, then
$
\mathscr{L}(u) \geq (8 \alpha)^{-1}
$.
Therefore any minimizer (and at least one exists) must have infinite derivative on the set $E$.  Thus the proof rests on the fact that the energy of $C^1$ functions is bounded away from the infimum of the energy over all $\mathrm{AC}(a,b)$ functions, i.e.\ that the {\it Lavrentiev phenomenon} occurs.  That such a gap can occur at all was first shown by~\citet{Lavrentiev-1926}.  Since the corresponding example of~\citeauthor{Ball-Mizel-1985} described above is autonomous, i.e. has no dependence on the variable $x$, it follows by a result of~\citet{Alberti-SerraCassano-1994} that there can be no Lavrentiev gap in this example.

This raises the question of the exact relationship between the singular set and the occurrence of the Lavrentiev phenomenon.  If a  problem exhibits the Lavrentiev phenomenon, then certainly the singular set of any minimizer over $\mathrm{AC}(a,b)$ must be non-empty, although it should be noted that the first examples of such problems found by~\citet{Lavrentiev-1926} and~\citet{Mania-1934} do not satisfy the $L_{pp} > 0$ condition required for classical partial regularity statements.  That a minimizer has a non-empty singular set does not, of course, in general imply the occurrence of a Lavrentiev gap.  Quite the reverse is in fact the case: one usually has to go to some effort to prove that a Lavrentiev gap does occur.  However, it might be conjectured that if a minimizer has a {\it large} singular set, for example of Hausdorff dimension one, then a gap must occur.  Thus the question is: can one prove Davie's result without inducing a Lavrentiev gap?  We show, using the methods which~\citet{Csornyei-etal-2008} introduced in the context of  universal singular sets, that this is indeed possible, i.e.\ that the existence of a large singular set does {\it not} imply occurrence of the Lavrentiev phenomenon.  Conversely, knowing that the Lavrentiev phenomenon does not occur does {\it not} tell us that the minimizer has small singular set, for example in the sense of Hausdorff dimension, nor indeed give us any information about the nature of the singular set not already available.

The methods of~\citeauthor{Csornyei-etal-2008} also naturally allow us to construct a Lagrangian giving this result which has arbitrary given superlinear growth, so this result is a generalization of Davie's result even without the further result preventing a Lavrentiev gap.

We prove the following theorem.
\begin{theorem}
\label{sing:main}
Let $[a,b]$ be a closed bounded subinterval of the real line, and let $E \subseteq [a,b]$ be closed and Lebesgue null.   Let $\omega \in C^{\infty}(\mathbb{R})$ be  strictly convex, such that $\omega(p) \geq \omega(0) = 0$ for all $p \in \mathbb{R}$, and $ \omega(p) / |p| \to \infty$ as $|p| \to \infty$ (i.e.\ $\omega$ has superlinear growth).

Then there exists $L \in C^{\infty}(\mathbb{R}^3)$, $L = L(x,y,p)$, strictly convex in $p$ and such that $L(x,y,p) \geq \omega(p)$ for all $(x,y,p) \in \mathbb{R}^3$, and function $u \in \mathrm{AC}(a,b)$ such that
\begin{itemize}
\item $u$ is the unique minimizer of the functional~\eqref{problem} with respect to its own boundary conditions;
\item the singular set of $u$ is precisely $E$; and
\item there exist admissible functions $u_k \in C^{\infty}([a,b])$ (i.e.\ $u_k (a) = u(a)$ and $u_k (b) = u(b)$)
such that $u_k \to u$ uniformly and $\mathscr{L}(u_k) \to \mathscr{L}(u)$.
\end{itemize}
\end{theorem}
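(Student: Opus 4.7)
The plan is to construct the candidate minimizer $u$ first and then engineer the Lagrangian $L$ around it. Since $E$ is closed and Lebesgue null, $[a,b] \setminus E$ is a countable union of disjoint open intervals $(a_n, b_n)$; I would define $u$ on each such interval to be smooth and monotone, with $u'(x) \to +\infty$ as $x$ approaches any endpoint of $(a_n, b_n)$ lying in $E$, tuned by a summable weighting in $n$ so that $u \in \mathrm{AC}(a,b)$ with singular set precisely $E$. Smooth approximants $u_k \in C^{\infty}([a,b])$, to witness non-occurrence of the Lavrentiev phenomenon, are obtained by replacing the steep portions of $u$ near $E$ with smooth pieces whose derivative is bounded by $k$, while preserving the boundary values of $u$.

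Following \citet{Csornyei-etal-2008}, I would take $L(x,y,p) = \omega(p) + M(x,y,p)$, where $M \geq 0$ is smooth, convex in $p$, and vanishes along the curve $x \mapsto (x, u(x), u'(x))$. The $\omega(p)$ summand supplies the required pointwise lower bound, strict convexity in $p$, and superlinear growth; the correction $M$ is built so that for each $x$ the function $p \mapsto L(x, u(x), p)$ has its strict minimum at $p = u'(x)$, and so that any admissible competitor having a finite derivative at some point of $E$ incurs a positive energy excess over $u$. Smoothness of $L$ across the accumulation of intervals $(a_n, b_n)$ is maintained by tapering $M$ appropriately near $E$; global strict convexity in $p$ is automatic because $\omega$ is strictly convex.

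Minimality and uniqueness of $u$ would then follow from a field-of-extremals/Weierstrass argument, exploiting strict convexity together with the fact that $M$ vanishes only along $(x, u(x), u'(x))$. The singular set of any minimizer contains $E$ by the excess-energy bound built into $M$, and equals $E$ because $u$ itself, whose singular set is exactly $E$, is the unique minimizer. Non-occurrence of the Lavrentiev phenomenon is checked directly on the sequence $u_k$: outside a shrinking neighbourhood of $E$ one has $u_k = u$ eventually, while over the neighbourhood the extra energy is controlled by the explicit form of $M$ together with the superlinear bound $\omega(u_k')$ on the truncated slopes, using integrability of $\omega(u')$.

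The principal obstacle is the tension between the two requirements that must be imposed on the penalty $M$: it must grow strongly enough in the $x$-direction near $E$ that no finite-derivative competitor can undercut $u$ at any single point of $E$, yet weakly enough that the smooth truncates $u_k$, which necessarily differ from $u$ in a full neighbourhood of $E$, pay only a vanishing additional cost. Davie's construction produces a Lavrentiev gap by violating the second condition; the refinement from \citet{Csornyei-etal-2008}, transposing their universal-singular-sets machinery to the present setting, is what makes it possible to balance the two through a careful matching of the blow-up rate of $u'$ near $E$ against the growth rate of $M$ in $(x,y,p)$.
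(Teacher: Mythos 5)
Your high-level outline matches the paper's in several respects: build $u$ first (via its derivative), take $L = \omega + (\text{penalty})$ so that superlinearity and strict convexity come for free from $\omega$, show minimality by a field-theory/calibration argument, and beat down the energy gap by truncating $u'$ to get smooth admissible competitors. But there is a genuine gap at the very heart of the argument, namely the minimality proof. You propose that the penalty $M \ge 0$ should ``vanish along the curve $x \mapsto (x,u(x),u'(x))$'' and that ``for each $x$ the function $p \mapsto L(x,u(x),p)$ has its strict minimum at $p=u'(x)$''. Neither condition is what is needed, and the second is in tension with the first: with $\omega(0)=0$ and $M \ge 0$ vanishing at $(x,u(x),u'(x))$, the value at $p=u'(x)$ is $\omega(u'(x))>0$, which need not be smaller than $L(x,u(x),0)=M(x,u(x),0)$. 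More fundamentally, pointwise minimization in $p$ along the graph of $u$ is neither necessary nor sufficient for $u$ to minimize the integral; it ignores the $y$-dependence of competitors. The Weierstrass/field argument you gesture at requires an explicit verification function (Hilbert's invariant integral), and that is exactly where the work lies.

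What the paper actually does is construct a slope field $\psi \in C^\infty(\mathbb{R}^2\setminus U(E))$ that agrees with $u'$ on the graph of $u$ and blows up near $U(E)$, together with a potential $\Phi \in C^\infty(\mathbb{R}^2\setminus U(E)) \cap C(\mathbb{R}^2)$ whose gradient satisfies a list of quantitative inequalities relative to $\psi$ and $\omega'$ (conditions \ref{11beta}--\ref{11epsilon} of Lemma~\ref{lemma11'}). The Lagrangian is then $L=\omega+F$ where $F$ is built from the corner-smoothing function $\gamma$ with parameters $\xi,\theta$ derived from $\Phi$, and the crucial property is the affine (null-Lagrangian) lower bound
\[
L(x,y,p) \;\ge\; \Phi_x(x,y) + p\,\Phi_y(x,y),
\]
with equality iff $p=\psi(x,y)$. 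Integrating this along any competitor $v$ gives $\mathscr{L}(v)\ge \Phi(V(b))-\Phi(V(a))$, with equality iff $v'=\psi(\cdot,v)$ a.e.; uniqueness of the minimizer follows. Note in particular that the paper's $F$ does \emph{not} vanish along $(x,u(x),u'(x))$ — it equals $\theta(u'(x)-\xi)$ there — so your stated design goal for $M$ is not the right one. Constructing $\Phi$ is itself delicate: one builds a sequence $\Phi^k$ with steep gradients on plane neighbourhoods $\Omega_k$ of $U(E)$ whose total diameter must be small, and the same measure constraint on the covers $V_k$ of $E$ (inequality~\eqref{measvk}) simultaneously controls $\mathrm{diam}(\Omega_k)$, makes the $\Phi^k$ converge, and makes the energy of the truncates $u_k$ converge. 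Your ``summable weighting in $n$'' over components of $[a,b]\setminus E$ is not the mechanism the paper uses: it works with a nested sequence of open covers $V_k\supset V_{k+1}\supset\cdots$ shrinking to $E$, not with the components of the complement. Without the potential $\Phi$ and the divergence-structure lower bound, the minimality and uniqueness claims in your proposal have no supporting argument, and the proof is incomplete.
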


For the entire paper we shall assume that $[a,b]$, $\emptyset \neq E \subseteq [a,b]$, and $\omega$ are fixed as in Theorem~\ref{sing:main}.
\begin{notation}
We let $\| \cdot \|$ denote the supremum norm on $\mathbb{R}^2$, which is the norm used throughout and for the following definitions.  The diameter $\mathrm{diam}(X) \in [0, \infty)$ of a bounded set $X \subseteq \mathbb{R}^2$ is defined by $\mathrm{diam} (X) = \sup_{x, y \in X} \| x- y\|$.  For sets $X,Y \subseteq \mathbb{R}^2$, the notation $X \Subset Y$ is used when the closure $\overline{X}$ of $X$ is compact and contained in $Y$, and the distance $\mathrm{dist}(X,Y) \in [0, \infty]$ between the two sets is defined by $\mathrm{dist}(X,Y) = \inf_{x \in X,\ y \in Y} \| x - y\|$, and is written $\mathrm{dist} (x, Y)$ when $X = \{ x\}$ (this is understood to be $+ \infty$ if one of the sets is empty).  On the real line, for $r>0$, we will use $B_r (X)$ for the $r$-neighbourhood of a subset $X \subseteq \mathbb{R}$.

For a bounded interval $[a,b]$ in $\mathbb{R}$, we shall write $\mathrm{AC}(a,b)$ for the class of absolutely continuous functions on $[a,b]$.  For any function $u \colon \mathbb{R} \to \mathbb{R}$ we let $U\colon \mathbb{R} \to \mathbb{R}^2$ be given by $U(x) = (x, u(x))$.  The supremum norm of a function on $\mathbb{R}^2$ shall be denoted by $\| \cdot \|_{\infty}$. Partial derivatives shall be denoted by subscripts, e.g.\ $\Phi_x$, $\Phi_y$  for functions $\Phi = \Phi (x,y) \colon \mathbb{R}^2 \to \mathbb{R}$.  The Lebesgue measure on the real line shall be denoted by $\lambda$.
 \end{notation}
 \begin{acknowledgements}
 I wish to thank David Preiss for his insightful advice on this subject and this paper.  
 \end{acknowledgements}
 \section{Calibration}
Our approach to the construction of minimizers with infinite derivatives is inspired by that in~\citet{Csornyei-etal-2008}.  We use a calibration argument to prove that functions with a specified derivative are minimizers of~\eqref{problem} where the Lagrangian $L$ is constructed via a potential defined on $\mathbb{R}^2$.  The original context of this method was the study of universal singular sets, specifically the construction of a Lagrangian with universal singular set containing a certain subset $S$ of the plane.  Thus~\citeauthor{Csornyei-etal-2008}\ constructed the potential  to have singular behaviour at these points $S$.  For each point in $S$ a minimizer was constructed with derivative given via the potential (hence infinite at that point) and graph passing through that point.

We need just one minimizer $u$, but one that has infinite derivative at every point of the set $E$.  Thus it is more natural to begin by defining $u$ (via its derivative), because firstly this is very easy, and secondly this readily gives us a sequence of smooth admissible functions approximating $u$ with which we shall see the Lavrentiev phenomenon does not occur.  So we approach the construction of the Lagrangian with the derivative of our intended minimizer already given, and with this derivative construct a function $\psi$ on the plane with which we can compare the potential.  This is then the reverse logic to that used in~\citeauthor{Csornyei-etal-2008}, in which minimizers were selected to solve an ODE given via the potential.  Our function $\psi$ is defined to mimic this idea in the sense that it agrees with the derivative of $u$ on the graph of $u'$; so our minimizer does satisfy (almost everywhere) the ODE $u' = \psi ( x, u)$.  This is however a consequence of our definition of $\psi$ given $u$, not vice versa.

We first recall Lemma~10 from~\citet{Csornyei-etal-2008}, stated and used almost as in this original paper, except that later we need also an upper bound of the function, for our smooth approximation estimates. We do not repeat the (simple) proof of the other statements.
\begin{lemma}
\label{sing:cornersmoothing}
There exists a  $C^{\infty}$ function $\gamma \colon \{ (p,a,b) \in \mathbb{R}^3 : b >0 \} \to \mathbb{R}$ with the following properties:
\begin{enumerate}[label=(\thelemma.\arabic*), leftmargin=*]
\item $ p \mapsto \gamma (p,a,b)$ is convex; \label{sing:4.4.a}
\item $\gamma (p,a,b) = 0$ for $p \leq a-1$; \label{sing:4.4.b}
\item $\gamma(p,a,b) = b(p-a)$ for $p \geq a+1$; \label{sing:4.4.c}
\item $\gamma (p,a,b) \geq \max\{0, b(p-a)\}$; and \label{sing:4.4.d}
\item $ \gamma (p,a,b) \leq b |p-a + 1|$. \label{sing:4.4.e}
\end{enumerate}
\end{lemma}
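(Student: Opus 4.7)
The plan is to exhibit $\gamma$ in the very explicit form $\gamma(p,a,b) = b\,\phi(p-a)$, where $\phi \in C^\infty(\mathbb{R})$ is a single smooth convex function that agrees with $\max\{0,p\}$ outside the interval $[-1,1]$. The dependence on the parameters $(a,b)$ is then linear and translational, so smoothness on $\{b>0\}$ (in fact on all of $\mathbb{R}^3$) is automatic, and property \ref{sing:4.4.a} reduces to convexity of $\phi$ since $b>0$.

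To construct $\phi$, I would start with a non-decreasing $C^\infty$ function $\eta \colon \mathbb{R} \to [0,1]$ that vanishes on $(-\infty, -1]$, equals $1$ on $[1,\infty)$, and satisfies $\int_{-1}^{1} \eta = 1$ (obtained by suitably rescaling a standard mollifier inside the band $[-1,1]$). I would then define
\[
\phi(p) = \int_{-1}^{p} \eta(s)\,ds.
\]
Then $\phi \in C^\infty$, $\phi'' = \eta' \geq 0$, and $\phi'(p) = \eta(p)$ equals $0$ for $p \leq -1$ and $1$ for $p \geq 1$. Hence $\phi(p) = 0$ for $p \leq -1$ (giving \ref{sing:4.4.b}) and $\phi(p) = \int_{-1}^{1}\eta + (p-1) = p$ for $p \geq 1$ (giving \ref{sing:4.4.c}).

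The remaining two bounds follow almost immediately. For \ref{sing:4.4.d}, convexity of $\phi$ together with $\phi(1) = 1$ and $\phi'(1) = 1$ yields $\phi(p) \geq 1 + 1\cdot(p-1) = p$ for every $p \in \mathbb{R}$, while $\phi \geq 0$ from the defining integral; multiplying by $b>0$ gives $\gamma(p,a,b) \geq \max\{0, b(p-a)\}$. For \ref{sing:4.4.e}, the estimate $\phi(p) \leq |p+1|$ is trivial for $p \leq -1$ (both sides are $\geq 0$), and for $p > -1$ it follows from $\phi(p) = \int_{-1}^{p} \eta \leq \int_{-1}^{p} 1 = p+1 = |p+1|$.

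There is no real obstacle to this argument; the only subtle point is fixing the normalization $\int_{-1}^{1}\eta = 1$ so that \ref{sing:4.4.c} holds with the correct slope and intercept, and noting that property \ref{sing:4.4.d} on $[-1,1]$ is not a direct consequence of the integral formula but rather of convexity combined with the boundary behaviour at $p=1$.
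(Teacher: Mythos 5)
Your construction is essentially identical to the paper's: the paper also sets $\gamma(p,a,b) = b\int_{-\infty}^{p-a}\eta$ for the same class of smoothed step functions $\eta$, which (since $\eta$ vanishes on $(-\infty,-1]$) is exactly your $b\,\phi(p-a)$ with $\phi(p)=\int_{-1}^{p}\eta$. The verifications of the five properties also match, with the small stylistic difference that you prove \ref{sing:4.4.e} uniformly for $p>-1$ via $\eta\leq 1$ whereas the paper splits off $p\geq a+1$ and appeals to \ref{sing:4.4.c}.
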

\begin{proof}
Recalling the proof from~\citet{Csornyei-etal-2008}, we see
$
\gamma(p,a,b) = b\int_{\infty}^{p-a} \eta
$,
where non-decreasing $\eta \in C^{\infty}(\mathbb{R})$ was chosen such that $\eta (x) = 0 $ if $ x \leq -1$, $\eta(x) = 1$ if $x \geq 1$, and $\int_{-1}^1 \eta = 1$.
The only new statement~\ref{sing:4.4.e} is trivial: if $p \leq a-1$ or $p \geq a+1$ then the result follows from~\ref{sing:4.4.b} or~\ref{sing:4.4.c} respectively.  If $a-1 \leq p \leq a+1$, then
\begin{equation*}
\gamma (p,a,b)  = b\! \int_{\infty}^{p-a} \!\eta (x)\, dx
 \leq b \!\int_{-1}^{p-a}\! 1 \, dx
= b(p-a +1)
\leq b|p-a +1|.\qedhere
\end{equation*}
\end{proof}
The next result is a version of Lemma~11 in~\citet{Csornyei-etal-2008}.  The main difference, as discussed, is that $\psi$ is given before the potential $\Phi$.  We recall that for a function $u \colon [a,b] \to \mathbb{R}$, the function $U \colon [a,b] \to \mathbb{R}^2$ is given by $U(x) = (x, u(x))$.
\begin{lemma}\label{lemma11'}
Let $S \subseteq \mathbb{R}^2$ be compact, $\psi \in C^{\infty}(\mathbb{R}^2  \setminus S)$ be such that  $\psi (x,y) \to \infty$ as $\mathrm{dist}((x,y), S) \to 0$, and $\Phi \in C^{\infty}(\mathbb{R}^2 \setminus S) \cap C(\mathbb{R}^2)$ satisfy the following conditions:
\begin{enumerate}[label=(\thelemma.\arabic*), leftmargin=*]
\item $- \Phi_x (x,y) \geq 4 \Phi_y (x,y)> 0$ for all $(x,y) \in \mathbb{R}^2 \setminus S$; \label{11beta}
\item $ \Phi_y (x,y) > 320\omega'(\psi(x,y))$  for all $(x,y) \in \mathbb{R}^2 \setminus S$; \label{11gamma}
\item $ -2 (\Phi_x / \Phi_y) (x,y) \leq \psi(x,y) \leq -160 (\Phi_x / \Phi_y) (x,y)$  for all $(x,y) \in \mathbb{R}^2 \setminus S$;  and \label{11delta}
\item for all $u \in \mathrm{AC}(a,b)$, the sets $U^{-1}(S)$ and $(\Phi \circ U)(U^{-1}(S))$ are Lebesgue null.\label{11epsilon}
\end{enumerate}
Then there exists a Lagrangian $L \in C^{\infty}(\mathbb{R}^3)$, strictly convex in $p$ and satisfying $L(x,y,p) \geq \omega(p)$ for all $(x,y,p) \in \mathbb{R}^3$, such that for all $u \in \mathrm{AC}(a,b)$,
$$
\mathscr{L}(u) = \int_a^b L(x, u(x), u'(x))\, dx \geq \Phi(U(b)) - \Phi (U(a)),
 $$
 with equality if and only if $u'(x) = \psi(x, u(x))$ for almost every $x \in [a,b]$.  In particular, any such $u$ is the unique minimizer of~\eqref{problem} with respect to its boundary conditions.
 \end{lemma}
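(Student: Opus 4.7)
The plan is a calibration argument in the style of~\citet{Csornyei-etal-2008}: I will construct $L$ so that on $(\mathbb{R}^{2}\setminus S)\times\mathbb{R}$ the pointwise inequality
\[ L(x,y,p) \geq \Phi_{x}(x,y) + p\,\Phi_{y}(x,y) \]
holds, with equality exactly at $p = \psi(x,y)$. Integrating along $U(x) = (x, u(x))$ and recognising $\Phi_{x}(U) + u'\Phi_{y}(U)$ as $\tfrac{d}{dx}\Phi(U(x))$ then gives $\mathscr{L}(u) \geq \Phi(U(b)) - \Phi(U(a))$, with equality forcing $u' = \psi\circ U$ almost everywhere.

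For the construction I set
\[ L(x,y,p) = \omega(p) + \gamma\bigl(p,\alpha(x,y),\beta(x,y)\bigr) \quad \text{on } (\mathbb{R}^{2}\setminus S)\times\mathbb{R}, \]
with $\gamma$ the corner smoothing of Lemma~\ref{sing:cornersmoothing} and smooth $\alpha,\beta$ chosen so that the strictly convex function $p \mapsto L(x,y,p) - \Phi_{x} - p\Phi_{y}$ has value and derivative zero at $p = \psi$. Matching slopes in the linear regime $\gamma(p,\alpha,\beta) = \beta(p-\alpha)$ (valid for $p \geq \alpha+1$) forces $\beta = \Phi_{y} - \omega'(\psi)$, positive by~\ref{11gamma}; matching values then fixes $\alpha$. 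A short calculation using~\ref{11beta} and~\ref{11delta} together with the elementary convexity estimate $\omega(\psi) \leq \psi\omega'(\psi)$ yields $\psi - \alpha \geq 1$, so $\psi$ genuinely lies in the linear regime of $\gamma(\cdot,\alpha,\beta)$ and the matching is consistent. Strict convexity of $L$ in $p$ is then inherited from strict convexity of $\omega$; the bound $L \geq \omega$ is immediate from~\ref{sing:4.4.d}; and the calibration inequality with its equality clause follows because the difference is strictly convex with a zero minimum at $\psi$.

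To extend $L$ smoothly across $S$, observe that $\psi \to \infty$ near $S$ forces $\alpha \to \infty$ uniformly near $S$: for each $(x_{0}, y_{0}) \in S$ and each bounded $p$-interval $I$ there is a neighbourhood of $(x_{0}, y_{0})$ on which $\alpha > \sup I + 1$, so by~\ref{sing:4.4.b} we have $\gamma \equiv 0$ on that neighbourhood times $I$; setting $L(x,y,p) = \omega(p)$ on $S \times \mathbb{R}$ therefore yields a $C^{\infty}$ Lagrangian on $\mathbb{R}^{3}$ satisfying all the required structural properties. For $u \in \mathrm{AC}(a,b)$ with $\mathscr{L}(u) < \infty$, writing $F = \Phi\circ U$, the pointwise calibration gives $L(x, u, u') \geq F'(x)$ almost everywhere on $[a,b] \setminus U^{-1}(S)$ and $L \geq 0$ on the null set $U^{-1}(S)$. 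I then deduce $F \in \mathrm{AC}[a,b]$ via Banach--Zaretski: $F$ is continuous, almost everywhere differentiable, and satisfies Luzin's condition $(N)$ -- off $U^{-1}(S)$ by the local Lipschitz property of $\Phi$ there, and on $U^{-1}(S)$ precisely by~\ref{11epsilon} -- while bounded variation follows from finiteness of $\mathscr{L}(u)$ together with the pointwise calibration. Thus $\mathscr{L}(u) \geq \int_{a}^{b} F' = \Phi(U(b)) - \Phi(U(a))$. Equality forces $u' = \psi\circ U$ a.e., and uniqueness of the minimizer given its boundary data then follows from this equality characterisation combined with ODE uniqueness for $u' = \psi(x,u)$ off $S$ and strict convexity of $L$ in $p$.

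The main obstacle is not the calibration algebra -- the constants in~\ref{11beta}--\ref{11delta} are visibly tuned to make the $\alpha, \beta$ arithmetic close -- but rather the integration step: passing from the pointwise inequality to the global identity for $F$ across the singular set $S$, which is exactly what hypothesis~\ref{11epsilon} is crafted to enable via Banach--Zaretski.
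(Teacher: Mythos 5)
Your construction of $L$ is essentially the paper's: your $(\alpha,\beta)$ are exactly the paper's $(\xi,\theta)$ with $\theta = \Phi_y - \omega'(\psi)$ and $\xi = \bigl(-\Phi_x + \omega(\psi) - \omega'(\psi)\psi\bigr)/\theta$, the estimates $\xi\geq 1$, $\psi\geq\xi+1$, and the growth $\xi\geq\psi/320$ near $S$ are the ones that make the corner-smoothing and the smooth extension across $S$ work, and the pointwise calibration with equality exactly at $p=\psi$ is identical. So the algebraic half is a match.

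The integration step, however, contains a genuine gap. You propose to conclude $F=\Phi\circ U\in\mathrm{AC}[a,b]$ by Banach--Zaretski, claiming bounded variation of $F$ ``from finiteness of $\mathscr{L}(u)$ together with the pointwise calibration.'' But the calibration gives $L\geq F'$ a.e.\ off $U^{-1}(S)$ and $L\geq 0$, so you control $\int F'_+$ from above by $\mathscr{L}(u)$; you obtain \emph{no} control on $\int F'_-$. Since $\Phi_x$, $\Phi_y$ are allowed to blow up as one approaches $S$, a competitor $u$ whose graph lingers near $S$ with small or negative $u'$ can make $F'=\Phi_x(U)+u'\Phi_y(U)$ as negative as you like on a set of positive measure while $L(x,u,u')$ stays bounded (indeed $L(x,u,0)=0$ when $\xi\geq1$), so there is no reason $F$ should be BV. That there is \emph{some} regularity theorem which would rescue this (continuous $+$ Luzin $(N)$ $+$ a.e.\ differentiable $+$ $\int f'_+<\infty$ $\Rightarrow$ AC) is plausible but is itself a nontrivial claim you neither cite nor prove. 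The paper sidesteps the whole issue: it never asserts $F$ is BV or AC for a general competitor. Instead it picks out only the components $(a_j,b_j)$ of $(a,b)\setminus U^{-1}(S)$ on which $\Phi(U(a_j))<\Phi(U(b_j))$, uses local absolute continuity of $F$ on each open component to get $\int_{a_j}^{b_j}\max\{0,F'\}\geq\Phi(U(b_j))-\Phi(U(a_j))$, throws away the decreasing components (this only improves the inequality), and uses the nullity of $(\Phi\circ U)(U^{-1}(S))$ from~\ref{11epsilon} to bound $\sum_{j}\bigl(\Phi(U(b_j))-\Phi(U(a_j))\bigr)\geq\Phi(U(b))-\Phi(U(a))$. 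Absolute continuity of $F$ is only invoked (via Saks, Ch.~IX, Thm.~7.7) in the converse direction, where $u'=\psi\circ U$ a.e.\ forces $F'\geq 0$ a.e.\ and the theorem for monotone-type functions with $(N)$ applies cleanly. You should replace your Banach--Zaretski step by this component-wise argument, or else supply a proof of the one-sided AC criterion you are implicitly relying on.
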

\begin{proof}
This mimics the proof of Lemma~11 in~\citet{Csornyei-etal-2008}.
Define $\theta, \xi \in C^{\infty}(\mathbb{R}^2 \setminus S)$ by
$$
\theta = \Phi_y - \omega'(\psi)\ \textrm{and}\ \xi = \frac{-\Phi_x + \omega(\psi) - \omega'(\psi) \psi}{\theta }.
$$

Fix $(x,y) \in \mathbb{R}^2 \setminus S$.  Then note by~\ref{11delta} and~\ref{11beta} that $\psi > 0$, so by properties of $\omega$ we have that $\omega'(\psi) > 0$.  So using also~\ref{11gamma} we have that
\begin{equation} \label{thetaphiy}
\theta > \Phi_y - \frac{1}{320}\Phi_y = \frac{319}{320}\Phi_y > 0,
\end{equation}
 so $\xi$ is well-defined. By convexity of $\omega$ we have that $\omega(p) - \omega'(p) p \leq \omega(0) = 0$ for all $p \geq 0$.  So, using this and properties~\ref{11delta} and~\ref{11gamma}, we see
\begin{equation}
 -\Phi_x  \geq -\Phi_x + \omega(\psi) - \omega'(\psi)\psi  = \xi \theta \geq - \Phi_x - \omega'(\psi)\psi
\geq  -\Phi_x + \frac{\Phi_y}{320}\cdot \frac{160 \Phi_x}{\Phi_y} = - \Phi_x / 2. \label{xxix}
\end{equation}
So, since $\Phi_y = \theta + \omega'(\psi) > \theta > 0$, we see by~\ref{11beta} that
\begin{equation}
\xi \geq -\Phi_x / (2 \theta) \geq - \Phi_x / (2 \Phi_y) \geq 2
, \label{xibignearS}
\end{equation}
and so, using~\ref{11delta},~\eqref{thetaphiy}, and~\eqref{xxix},
$$
\psi  \geq -2\Phi_x/\Phi_y \geq -2 \cdot 319 \Phi_x/ (320 \theta) \geq 3 \xi / 2 \geq \xi + 1.
$$
The point of these estimates, and the choice of constants in the assumptions which allows them to be derived, is that
\begin{gather}
0 \leq \xi -1 \label{xi>1} \shortintertext{and}
 \psi  \geq \xi + 1. \label{xi<psi-1}
\end{gather}
We use the corner-smoothing function $\gamma$ from Lemma~\ref{sing:cornersmoothing} to define
$$
F(x,y,p) =
\begin{cases}
 \gamma (p, \xi (x,y), \theta (x,y)) & (x,y) \in \mathbb{R}^2 \setminus S, \\
0 & \textrm{otherwise}.
\end{cases}
$$
Clearly $F \in C^{\infty}((\mathbb{R}^2 \setminus S) \times \mathbb{R})$.  For fixed $p \in \mathbb{R}$, by the growth assumption on $\psi$ there exists an open set $\Omega \supseteq S$ such that $\psi > 320(p+2)$ on $\Omega$.  By~\eqref{xibignearS} and~\ref{11delta} we see that $ \xi \geq - \Phi_x /(2 \Phi_y ) \geq \psi /320 \geq p + 2$, and so $F=0$ on $\Omega \times (-\infty, p + 1)$, by property~\ref{sing:4.4.b} of $\gamma$.  So in fact $F \in C^{\infty}(\mathbb{R}^3)$.  Clearly $F \geq 0$ by~\ref{sing:4.4.d}, and is convex in $p$ by~\ref{sing:4.4.a}.

Defining $L(x,y,p) = F(x,y,p) + \omega(p)$ gives a Lagrangian $L \in C^{\infty}(\mathbb{R}^3)$ such that $L \geq \omega$ and $L$ is strictly convex in $p$. For $(x,y) \in \mathbb{R}^2 \setminus S$, we have, by convexity of $\omega$ and property~\ref{sing:4.4.d} of $\gamma$, that
\begin{align*}
L(x,y,p) & \geq \omega(\psi(x,y)) + \omega'(\psi(x,y)) (p- \psi (x,y)) + \theta (x,y) ( p - \xi (x,y)) \\
& = \Phi_x (x,y) + p \Phi_y (x,y).
\end{align*}
Moreover, $p = \psi(x,y)$ implies equality by~\eqref{xi<psi-1} and~\ref{sing:4.4.c}; and equality in this inequality implies $p = \psi(x,y)$  by strict convexity of $\omega$.  Thus equality holds in this inequality if and only if $p = \psi (x,y)$.

Let $u \in \mathrm{AC}(a,b)$. Since $\Phi \in C^{\infty}(\mathbb{R}^2 \setminus S)$, we see that $(\Phi \circ U) \colon [a,b] \to \mathbb{R}$ is differentiable for almost every $x \notin U^{-1}(S)$, which is almost everywhere on $[a,b]$ by~\ref{11epsilon}, with derivative $(\Phi \circ U)'(x) = \Phi_x(U(x)) + u'(x)(\Phi_y (U(x))$, and for almost every $ x \in [a,b]$, the above inequality implies that
 \begin{equation}
  L(x, u(x), u'(x)) \geq  \Phi_x (x, u(x)) + u'(x) \Phi_y (x, u(x)) = (\Phi \circ U)'(x), \label{sing:key}
 \end{equation}
 with equality if and only if $u'(x) = \psi (x,u(x))$.   We note that $(\Phi \circ U)$ has the Lusin property, i.e.\ maps null sets to null sets:~\ref{11epsilon} implies that any subset of $U^{-1}(S)$ is mapped to a null set, and on $[a,b]\setminus U^{-1}(S)$ the function $(\Phi \circ U)$ is locally absolutely continuous, since $\Phi \in C^{\infty}(\mathbb{R}^2 \setminus S)$.

Let $\{(a_j, b_j)\}_{j \in J}$ be the (at most countable) sequence of components of $(a,b) \setminus U^{-1}(S)$ such that $\Phi (U(a_j)) < \Phi(U(b_j))$ (if there are no such components then the result is trivial).   Then using that $(\Phi \circ U)$ is locally absolutely continuous on $(a,b) \setminus U^{-1}(S)$ and the fact from~\ref{11epsilon} that $(\Phi \circ U)(U^{-1}(S))$ is null, we see, using~\eqref{sing:key}, that
 \begin{align*}
 \int_a^b L(x,u(x),u'(x)) \, dx &\geq \sum_{j \in J} \int_{a_j}^{b_j} L(x, u(x), u'(x))\, dx \\
 & \geq \sum_{j \in J} \int_{a_j}^{b_j} \max \{ 0, (\Phi \circ U)'\} \, dx \\
 & \geq \sum_{j \in J} \Phi ( U(b_j)) - \Phi ( U (a_j)) \\
 & \geq \Phi  ( U(b)) - \Phi ( U(a)).
 \end{align*}
 Equality in this relation implies that $L(x,u(x), u'(x)) = (\Phi \circ U)' (x) $ for almost every $x \in \bigcup_{j \in J}(a_j, b_j)$, but also that $\bigcup_{j \in J}(a_j, b_j) = (a,b) \setminus U^{-1}(S)$.  Therefore in fact $L(x,u(x), u'(x)) = (\Phi \circ U)' (x)$ for almost every $x \in (a,b) \setminus U^{-1}(S)$. By~\eqref{sing:key} this implies that $u'(x) = \psi(x,u(x))$ for almost every $x \in [a,b]$, since $U^{-1}(S)$ is null by~\ref{11epsilon}.

 Conversely, $u'(x) = \psi(x,u(x))$ almost everywhere implies  by~\ref{11delta} that
 $$
 (\Phi \circ U)'(x) = (\Phi_x \circ U)(x) + \psi(x) (\Phi_y \circ U)(x) \geq (-\Phi_x \circ U)(x) \geq 0
 $$
 almost everywhere.  This, combined with the fact that $(\Phi \circ U)$ has the Lusin property, implies that $(\Phi \circ U)$ is absolutely continuous~\citep[see][Chapter~IX, Theorem~7.7]{Saks-Theory}.  Moreover,~\eqref{sing:key} gives that $L(x,u(x), u'(x)) = (\Phi \circ U)'(x)$ almost everywhere, hence
 $$
 \int_a^b L(x, u(x), u'(x)) \, dx = \int_a^b (\Phi \circ U)'(x) \, dx = \Phi ( U(b)) - \Phi  (U(a)),
 $$
 as required.
\end{proof}
\section{Construction of the  minimizer}
We now begin the construction of our future minimizer $u$, by constructing first its derivative $\phi$.  The essential property of $\phi$ is that $\phi(x) \to \infty$ as $\mathrm{dist} (x, E) \to 0$.  We naturally define $\phi$ as the limit of a sequence of non-negative $C^{\infty}(\mathbb{R})$ functions $\{\phi_k\}_{k=0}^{\infty}$, where each $\phi_k$ is bounded above, and on an open set $V_k$ covering $E$ attains this bound (which tends to $\infty$ as $k \to \infty$).  We construct $\phi_k$ so that their primitives $u_k$ will be admissible functions in problem~\eqref{problem} (i.e.\ have the same boundary conditions as $u$) and converge uniformly to $u$.   In fact we shall guarantee that $u = u_k$ off $V_k$.  So, since our Lagrangian will be constructed as in Lemma~\ref{lemma11'}, our estimates  showing that there is no Lavrentiev gap reduce just to estimates of the integral over $V_k$ of a function involving the gradient of the potential $\Phi$.  This then requires a certain upper bound for the measure of $V_k$.  We must also remember that our potential $\Phi$ must have a gradient which satisfies inequalities involving $\phi$ and hence $\phi_k$.  This $\Phi$ will---just as in~\citet{Csornyei-etal-2008}---be defined using a sequence of $C^{\infty}(\mathbb{R}^2)$ functions $\{\Phi^k\}_{k=0}^{\infty}$ which have appropriately steep gradients on open sets $\Omega_k$ around $U(E)$.  To guarantee that these $\Phi^k$ converge, these sets must be small in the directions of these gradients, which is most easily achieved by ensuring they are small in all directions.  We choose $\Omega_k$ so that this measure is controlled by that of $V_k$; this gives another upper bound for the measure of $V_k$.  Other bounds are required for technical reasons in the proof; we impose just one inequality which suffices to give all the results.

For $k \geq 0$, let $\{h_k\}_{k=0}^{\infty}$ and $\{A_k\}_{k=0}^{\infty}$ be strictly increasing sequences of real numbers tending to infinity, such that $h_0, A_0 \geq 1$.  We will eventually need to define explicit values for these sequences to satisfy the exact inequalities required in Lemma~\ref{lemma11'}, but until we make these definitions, the construction requires only these general assumptions.

Define $V_0 = \mathbb{R}$ and $W_0 = (a-1,b+1)$.  For $k\geq 1$, we find  decreasing sequences of open sets $W_k \Subset V_k\subseteq \mathbb{R}$ covering $E$, of form
\begin{align*}
V_k  & =  \bigcup_{i=1}^{n_k}(a_k^i, b_k^i) \subseteq \mathbb{R},\ \text{where $a_k^1 < b_k^1 \leq a_k^2 < b_k^2 \leq \ldots a_k^{n_k} < b_k^{n_k}$, and}\\
W_k & = \bigcup_{i=1}^{n_k}(\tilde{a}_k^i, \tilde{b}_k^i) \subseteq \mathbb{R},\ \text{where $a_k^i < \tilde{a}_k^i < \tilde{b}_k^i < b_k^i $ for all $1 \leq i \leq n_k$},
\end{align*}
for some $n_k \geq 1$, such that
\begin{gather}
V_k \subseteq B_{2^{-k}}(E);\label{vkshrink}\\
V_k \Subset W_{k-1};\  \textrm{and}\label{vkcptdec}\\
\lambda(V_k) \leq \left((1 + 4(\mathrm{dist}(E, \mathbb{R} \setminus W_{k-1}))^{-1})2^{k+3}A_{k+1}^2(h_{k+1}+2)(1 + \omega (h_{k+1} + 2))n_{k-1}\right)^{-1}. \label{measvk}
\end{gather}
For $k \geq 1$ we define
\begin{itemize}
 \item $r_k^i = b_k^i - a_k^i > 0$, and $r_k = \min_{1 \leq i \leq n_k}r_k^i > 0$;
 \item $\tilde{r}_k^i = \tilde{b}_k^i - \tilde{a}_k^i > 0$, and $\tilde{r}_k = \min_{1 \leq i \leq n_k} \tilde{r}_k^i > 0$; and
 \item $\delta_k = \mathrm{dist} (E, \mathbb{R} \setminus V_k)$, where this is strictly positive by compactness of $E$.
 \end{itemize}
 Let $V_k^i = (a_k^i, b_k^i)$ and $W_k^i = (\tilde{a}_k^i, \tilde{b}_k^i)$ for each $1 \leq i \leq n_k$.  We assume that each component of $W_k$ contains a point of $E$.  Then since $\lambda (V_k) \to 0$ as $k \to \infty$, we see that $\bigcap_{k=1}^{\infty}V_k = E$.  We assume further that, as would be natural in the construction of the sets, that, when $E \subseteq (a,b)$, the sets $V_k \subseteq (a,b)$, and otherwise, i.e.\ when $a$ or $b \in E$, that the interval(s) covering the endpoint(s) are centred around the relevant endpoint(s), and that all the other intervals lie inside $(a,b)$.  Then in all cases,
 \begin{equation}
 \label{centred}
 \tilde{r}_k \leq 2 \lambda (W_k^j \cap (a,b)) 
 \end{equation} 
 for all $1 \leq j \leq n_k$ and all $k \geq 1$.
 
The sets $W_k$ only play a role later, when we have to define the function $\psi$ on the plane which equals $u'$ on the graph of $u$: it becomes at that point necessary for us to have  a gap between the sets $V_k$, where we shall stipulate the value of $u_k'$, and the sets $W_k$ where we permit some non-zero addition to $u_{k-1}'$ in the definition of $u_{k}'$.  Until Lemma~\ref{psilemma}, however, little is lost if one does not distinguish between $V_k$ and $W_k$.

 Note for $1 \leq i \leq n_k$,  $x \in V_k^i$ and $y \notin V_{k-1}$, we in fact have, choosing $z \in E \cap V_k^i$, that $| z - x|\leq r_k^i \leq \lambda(V_k)$, and so by~\eqref{measvk},
$$
 | x- y|  \geq |y - z| - |z - x|  \geq  \delta_{k-1} - \lambda(V_k) \geq \delta_{k-1}/2.
$$
Thus, since this holds for all $1 \leq i \leq n_k$,
\begin{equation} \label{distvkvk-1}
\mathrm{dist}(V_k, \mathbb{R} \setminus V_{k-1}) \geq \mathrm{dist}(E,\mathbb{R}\setminus V_{k-1}) / 2.
\end{equation}
\begin{lemma}
\label{uuklemma}
There exist a strictly increasing function $u \in C^{\infty}([a,b] \setminus E) \cap C([a,b])$ and a sequence $\{u_k\}_{k=0}^{\infty}$ of strictly increasing functions $u_k \in C^{\infty}([a,b])$ such that, for all $k \geq 0$,
\begin{enumerate}[label=(\thelemma.\arabic*), leftmargin=*]
\item $u(a) = u_k (a)$ and $u(b) = u_k (b)$; \label{ukadmissible}
\item $u(x) = u_k (x)$ for all $x \in [a,b] \setminus W_k$, and consequently $u'(x) = u_k'(x)$ for all $x \in [a,b]\setminus \overline{W_k}$; \label{u=ukoffvk}
\item $u' (x) \geq h_k$ for all $x \in V_k \setminus E$, and $u_k'(x) \geq h_l$ for all $x \in V_l$ for all $0 \leq l \leq k$; \label{psibigonvk}
\item $u'(x) = u_k'(x)$ for all $x \in [a,b] \setminus V_k$, and $u_k'(x) \leq h_k + 2$ for all $ x\in [a,b]$; and \label{u'offvk}
\item $u_k \to u$ uniformly on $[a,b]$. \label{unif}
\end{enumerate}
\end{lemma}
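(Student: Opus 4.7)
The plan is to construct $\{u_k\}$ inductively and obtain $u$ as their uniform limit, using smooth cutoffs supported in the shells provided by the containments $V_{k+1}\Subset W_k\Subset V_k$. By \eqref{vkcptdec} I can choose $\eta_k\in C^\infty(\mathbb{R})$ with $0\leq\eta_k\leq 1$, $\eta_k\equiv 1$ on $\overline{V_{k+1}}$, and $\mathrm{supp}\,\eta_k\subset W_k$. For the base case I set $u_0(x):=c+(h_0+1)(x-a)$, so $u_0'\equiv h_0+1\in[h_0,h_0+2]$, verifying \ref{psibigonvk}--\ref{u'offvk} at $k=0$ (since $V_0=\mathbb{R}$ and $[a,b]\subset W_0$, so the ``off $W_0$'' clauses are vacuous); declare $u(a):=u_0(a)$ and $u(b):=u_0(b)$, fixing the common boundary values.

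For the inductive step I write $u_k=u_{k-1}+\tau_k$ with $\tau_k\in C^\infty_c(W_k)$. The correction $\tau_k$ is chosen componentwise on each $W_k^i$ with $\int_{W_k^i}\tau_k'=0$, which preserves $u_{k-1}$'s endpoint values and therefore propagates \ref{ukadmissible} and \ref{u=ukoffvk}, and with $u_k'=u_{k-1}'+\tau_k'$ lying in $[h_k,h_k+2]$ on $W_k^i$ and remaining $\leq h_k+2$ elsewhere. To ensure \ref{psibigonvk} on the annular region $V_k\setminus W_k$, where $u_k$ coincides with $u_{k-1}$, the preceding $u_{k-1}$ is built with the foresight that $u_{k-1}'\geq h_k$ there; inductively this produces the layered profile ``$u_l'\approx h_l$ on $V_l\setminus W_l$'' as one descends through the sequence.

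Define $u(x):=\lim_k u_k(x)$. Because $W_k\searrow E$, on any compact subset of $[a,b]\setminus E$ only finitely many $\tau_j$ are nonzero, so the sequence stabilises there and $u\in C^\infty([a,b]\setminus E)$. The sup-norm estimate $\|u_k-u_{k-1}\|_\infty\leq (h_k-h_{k-1}+2)\lambda(W_k)$, combined with \eqref{measvk}, makes $\{u_k\}$ uniformly Cauchy; hence $u\in C([a,b])$ and \ref{unif} holds. Strict monotonicity of $u_k$ is inherited from $u_k'\geq h_0>0$, strict monotonicity of $u$ follows from its derivative being $\geq h_0$ a.e., and the pointwise bound $u'\geq h_k$ on $V_k\setminus E$ in \ref{psibigonvk} is obtained by passing to the limit in the inductive estimates (for each such $x$, eventually $x\notin W_j$ so $u_j(x)$ is eventually constant in $j$, and the derivative at that point inherits the lower bound).

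The main obstacle is the inductive construction of $\tau_k$: one must produce a $C^\infty$-function with compact support in each $W_k^i$ (which handles smooth matching at $\partial W_k^i$ automatically), with prescribed mean-zero integral of its derivative, and satisfying the pointwise bound $\tau_k'\in[h_k-u_{k-1}',h_k+2-u_{k-1}']$. The existence of such $\tau_k$ reduces to a smooth interpolation problem that is feasible precisely when the feasible range of $\tau_k'$ contains zero, i.e.\ when the increments $h_k-h_{k-1}$ are controlled (we are free to impose, say, $h_k-h_{k-1}\leq 1$ when fixing the sequence $\{h_k\}$ later). Once feasibility is secured at each step, the remaining properties follow from straightforward bookkeeping via \eqref{measvk}.
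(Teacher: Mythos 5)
There is a genuine gap in your construction, and it stems from where you place the support of the correction. You build $u_k=u_{k-1}+\tau_k$ with $\tau_k\in C^{\infty}_c(W_k)$, and you want $u_k'\in[h_k,h_k+2]$ on $W_k$. But compact support in $W_k^i$ forces $\int_{W_k^i}\tau_k'=0$ automatically, while the inductive hypothesis gives $u_{k-1}'\leq h_{k-1}+2$ everywhere. If $h_k>h_{k-1}+2$, then to reach $u_k'\geq h_k$ on all of $W_k^i$ you would need $\tau_k'=u_k'-u_{k-1}'\geq h_k-(h_{k-1}+2)>0$ pointwise on $W_k^i$, which is incompatible with $\int_{W_k^i}\tau_k'=0$. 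The same tension shows up in your ``foresight'' requirement that $u_{k-1}'\geq h_k$ already on $V_k\setminus W_k$: this is only possible if $h_k\leq h_{k-1}+2$. You notice this and propose to ``impose $h_k-h_{k-1}\leq 1$ when fixing the sequence,'' but that is not an option: Lemma~\ref{uuklemma} is stated for an \emph{arbitrary} strictly increasing sequence $\{h_k\}$ tending to infinity, and the paper later sets $h_k=10(3+2^{k+1})$ so that $h_k-h_{k-1}=10\cdot 2^{k}\to\infty$ --- those exponentially growing increments are forced by the inequalities needed in Lemma~\ref{lemma11'} (roughly $10t_k\leq h_{k-1}$ and $h_k+2\leq 160\,t_{k-1}$ with $t_k$ tied to the potential's gradients). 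So a proof that quietly restricts $h_k-h_{k-1}$ establishes only a weaker statement that will not feed into the rest of the argument.

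The paper's proof avoids this by supporting the derivative-level correction $\rho_k$ in the \emph{larger} shell $W_{k-1}$, not $W_k$. It sets $\rho_k=h_k-h_{k-1}$ on the tiny set $V_k\Subset W_{k-1}$, allows $\rho_k$ to dip only as low as $-1$ on $W_{k-1}\setminus V_k$, and enforces $\int_{W_{k-1}^j\cap(a,b)}\rho_k=0$. The point is that \eqref{measvk} makes $\lambda(V_k)$ so small relative to $\lambda(W_{k-1}^j\cap(a,b))$ that a bump of height $h_k-h_{k-1}$ on $V_k$ can be balanced by a shallow dip of depth at most $1$ spread over the much larger remainder of $W_{k-1}^j$; the explicit computation in the paper shows $\lambda(W_{k-1}^j\cap(a,b))/\lambda(V_k\cap W_{k-1}^j\cap(a,b))>h_k-h_{k-1}+1$, which is exactly the feasibility condition. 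This freedom simply does not exist if the correction is confined to $W_k$, because the target lower bound $u_k'\geq h_k$ holds on all of $W_k\subseteq V_k$ and there is nowhere inside $W_k$ for the derivative to drop and compensate. If you rework the construction with the correction supported in $W_{k-1}$ (and the extra care in \eqref{intphik=0}--\eqref{phikonvk} to keep the componentwise integrals fixed so that \ref{u=ukoffvk} and \ref{ukadmissible} survive), the rest of your bookkeeping --- stabilisation off $E$, uniform Cauchy via \eqref{measvk}, strict monotonicity from $u_k'\geq 1$ --- goes through essentially as you sketch it.
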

\begin{proof}
We first exhibit a sequence $\{\phi_k\}_{k=0}^{\infty}$ of functions $\phi_k \in C^{\infty}(\mathbb{R})$ such that for all $k\geq 0$,
\begin{enumerate}[label=(\thelemma.\alph*), leftmargin=*]
\item $ 1 \leq \phi_k (x) \leq h_k +2$ for all $x \in \mathbb{R}$; \label{psikbds}
\item $h_k + 1 \leq \phi_k(x)$ for all $x \in V_k$; \label{psikbigonvk}
\item $ \phi_k (x)= \phi_{l}(x)$ for  all $x \in \mathbb{R} \setminus W_{l}$ for all $0 \leq l \leq k$; \label{psiloffvk}
\item $h_l \leq \phi_k (x)$ for  $x \in V_l$ for all $0 \leq l \leq k$; and \label{psilbigonvk}
\item $\int_{W_l^i \cap (a,b)} \phi_k = \int_{W_l^i \cap (a,b)} \phi_l$ for all $1 \leq i \leq n_l$ and all $0 \leq l \leq k$. \label{intpsil}
\end{enumerate}
Define $\phi_0 (x) = h_0 +1$ for all $x \in \mathbb{R}$, which clearly satisfies~\ref{psikbds}--\ref{intpsil}.   Let $k \geq 1$, and consider $1 \leq j \leq n_{k-1}$.  Note that inequalities~\eqref{measvk} and~\eqref{centred} imply that
\begin{align*}
\lambda(W_{k-1}^j \cap V_k \cap(a,b)) \leq \lambda(V_k) \leq \frac{ \mathrm{dist}(E, \mathbb{R} \setminus W_{k-1})}{2(h_k - h_{k-1} + 1)} & \leq \frac{\tilde{r}_{k-1}}{2(2(h_k - h_{k-1}) +1)} \\
&\leq \frac{\lambda(W_{k-1}^j \cap (a,b))}{2(h_k - h_{k-1}) + 1},
\end{align*}
and so
$$
\frac{\lambda(W_{k-1}^j \cap (a,b))}{\lambda(V_k \cap W_{k-1}^j \cap (a,b))} \geq 2 (h_k - h_{k-1}) +1 > h_k - h_{k-1} + 1.
$$
Hence we can choose $\rho_k \in C^{\infty}(\mathbb{R})$ such that
 \begin{gather}
 \rho_k (x) = 0 \ \textrm{for all $x \in \mathbb{R} \setminus W_{k-1};$} \label{phikoffvk-1}\\
 -1 \leq \rho_k (x) \leq h_k - h_{k-1}\ \textrm{for all $x \in \mathbb{R}$};\label{phikbds}\\
 \rho_k (x) = h_k - h_{k-1}\ \textrm{for all  $x \in V_k$; and} \label{phikonvk} \\
 \int_{W_{k-1}^j \cap(a,b)} \rho_k = 0 \ \textrm{for each $1 \leq j \leq n_{k-1}$}. \label{intphik=0}
 \end{gather}
For example, fix $1\leq j \leq n_{k-1}$, and note that when considering open sets $G_k^j$, $\tilde{G}_k^j$ such that
$$
V_k \cap W_{k-1}^j \Subset G_k^j \Subset \tilde{G}_k^j \Subset W_{k-1}^j,
$$
the value $\lambda(\tilde{G}_k^j \cap(a,b))/ \lambda(G_k^j \cap(a,b))$ depends continuously on the measures of the two sets $\tilde{G}_k^j \cap(a,b)$ and $G_k^j \cap(a,b)$, and takes values greater than but arbitrarily close to $1$, and less than but arbitrarily close to $\lambda( W_{k-1}^j \cap(a,b)) / \lambda(W_{k-1}^j \cap V_k \cap(a,b)) > h_k - h_{k-1} + 1$.  Thus we may choose sets $\tilde{G}_k^j$ and $G_k^j$ such that
$$
\frac{\lambda(\tilde{G}_k^j \cap(a,b))}{\lambda(G_k^j \cap(a,b))} = h_k - h_{k-1}  + 1;
$$
that is
$$
(h_k - h_{k-1}) \lambda(G_k^j \cap(a,b)) = \lambda((\tilde{G}_k^j \cap (a,b)) \setminus (G_k^j \cap (a,b))).
$$
Then defining $\rho_k^j \colon \mathbb{R} \to \mathbb{R}$ by
$$
\rho_k^j (x) =
\begin{cases}
h_k - h_{k-1} & x \in G_k^j \\
-1 & x \in \tilde{G}_k^j \setminus G_k^j\\
0 & \mathrm{otherwise},
\end{cases}
$$
we see that
$$
\int_{a}^{b}\rho_k^j = \int_{\tilde{G}_{k-1}^j \cap (a,b)} \rho_k^j = (h_k - h_{k-1})\lambda(G_k^j \cap(a,b)) - \lambda((\tilde{G}_k^j \cap( a,b))\setminus (G_k^j \cap (a,b))) = 0.
   $$
Choosing an appropriate mollification, we can assume that $\rho_k^j$ is of class $C^{\infty}(\mathbb{R})$, the same equation holds, and that $\rho_k^j$ satisfies~\eqref{phikoffvk-1}--\eqref{intphik=0}, with $W_{k-1}^j \cap V_k$ replacing $V_k$ in condition~\eqref{phikonvk}.  Then defining $\rho_k = \sum_{j=1}^{n_{k-1}}\rho_k^j$ gives $\rho_k$ as claimed.

Using this $\rho_k$, we now suppose $\phi_{k-1}$ to be defined, and set
$
\phi_k = \phi_{k-1} + \rho_k
$.
This defines our sequence $\{\phi_k\}_{k=0}^{\infty}$.  We now show by induction on $k \geq 0$ that these functions satisfy the requirements~\ref{psikbds}--\ref{intpsil}.  Let $k \geq 1$, and suppose $\phi_{k-1}$ has been constructed in this way and satisfies all the conditions.

By~\eqref{phikoffvk-1} we see that $\phi_k = \phi_{k-1}$ off $W_{k-1}$, which gives~\ref{psiloffvk} by inductive hypothesis  and since $\{W_k\}_{k=0}^{\infty}$ is a decreasing sequence.  Then for points not in $W_{k-1}$, we see that the inequality in~\ref{psikbds} holds by inductive hypothesis~\ref{psikbds} and since $\{h_k\}_{k=0}^{\infty}$ is an increasing sequence.  For $x \in W_{k-1}$ we have, by inductive hypothesis~\ref{psikbigonvk},~\eqref{phikbds}, and inductive hypothesis~\ref{psikbds}, that
$$
1 \leq h_{k-1} \leq  \phi_{k-1} (x)  -1 \leq \phi_k (x) \leq \phi_{k-1}(x)  + (h_k - h_{k-1}) \leq  h_k + 2.
$$
Hence the inequality in~\ref{psikbds} holds everywhere, as required.
Note that for $x \in V_k$ we have by~\eqref{phikonvk} and inductive hypothesis~\ref{psikbigonvk}, since $V_k \subseteq V_{k-1}$, that
$$
\phi_k(x) = \phi_{k-1}(x) + h_k - h_{k-1} \geq 
 h_k + 1,
$$
as required for~\ref{psikbigonvk}.  This implies~\ref{psilbigonvk} when $x \in V_k$.  Otherwise, choose the greatest index $0 \leq l < k$ such that $x \in V_l$.   If $l < k-1$, then $x \notin V_{k-1}$, so inequality~\ref{psilbigonvk} follows by~\eqref{phikoffvk-1} and inductive hypothesis~\ref{psilbigonvk}.  If $l = k-1$, then $ x \in V_{k-1}$, and so by~\eqref{phikbds} and inductive hypothesis~\ref{psikbigonvk},
$$
\phi_k(x) \geq \phi_{k-1}(x) - 1 \geq h_{k-1},
$$
hence~\ref{psilbigonvk} holds in all cases.  For the claim~\ref{intpsil}, there is nothing to prove for $l = k$, so let $0 \leq l < k$,  and fix $0 \leq i \leq n_l$. Then using~\eqref{phikoffvk-1},~\eqref{intphik=0}, and the inductive hypothesis we have that
\begin{align*}
\int_{W_l^i \cap(a,b)}\phi_k = \int_{W_l^i \cap(a,b)} \phi_{k-1} + \int_{W_l^i \cap(a,b)} \rho_k  & = \int_{W_l^i \cap (a,b)} \phi_{k-1} + \int_{W_l^i \cap W_{k-1} \cap(a,b)} \rho_k \\
& = \int_{W_l^i \cap(a,b)} \phi_{k-1} \\
& = \int_{W_l^i \cap (a,b)} \phi_l,
\end{align*}
since $\{W_k\}_{k=0}^{\infty}$ is decreasing, so $W_l^i \cap W_{k-1} \cap (a,b)$ consists of components $W_{k-1}^j \cap (a,b)$ of $W_{k-1} \cap (a,b)$.

Using~\eqref{vkshrink} we see that for all $ x \notin E$ there is $k \geq 1$ such that $x \notin W_l$ for all $l \geq k$, thus by~\ref{psiloffvk} letting $\phi (x) = \lim_{k \to \infty} \phi_k(x)$ defines a well-defined function $\phi \in C^{\infty}(\mathbb{R} \setminus E)$ such that
\begin{equation}
\phi(x)  = \phi_k (x) \ \textrm{for all} \ x \notin W_k \label{psioffvk}.
\end{equation}
By~\ref{psikbds} we have that $\phi (x)  \geq 1$ for all $x \in \mathbb{R} \setminus E$. 

Now,  $|\phi_k| \leq |\phi_0| + \sum_{l=1}^{\infty}|\rho_l|$ for all $k \geq 0$, and using~\eqref{phikoffvk-1},~\eqref{phikbds}, and~\eqref{measvk} we see that
\begin{align*}
\int_a^b |\phi_0| + \int_a^b \sum_{l=1}^{\infty} |\rho_l| & \leq (b-a) (h_0 + 1) + \sum_{l=1}^{\infty} \lambda([a,b] \cap W_{l-1})(h_l - h_{l-1} +1) \\
& \leq  (b-a)( h_0 + h_1 + 2) + \sum_{l=1}^{\infty} (h_{l+1}+1) \lambda(V_l)\\
& \leq (b-a)(2h_1 + 2) + \sum_{l=1}^{\infty} 2^{-l} \\
& < \infty.
\end{align*}
So by the dominated convergence theorem $\phi \in L^1(a,b)$, and
\begin{equation} \label{intcvg}
\int_a^b \phi_k \to \int_a^b \phi\ \textrm{as}\ k \to \infty.
\end{equation}

We now define strictly increasing functions $u_k \in C^{\infty}([a,b])$ for each $k \geq 0$ and $u \in C^{\infty}([a,b] \setminus E) \cap  C([a,b])$ by
$$
u_k (x) = \int_a^x \phi_k (t) \, dt\ \text{and}\ u(x) = \int_a^x \phi(t)\, dt,
$$
and so $u_k' = \phi_k$ everywhere and $u' = \phi$ off $E$, in particular almost everywhere.  Condition~\ref{psibigonvk} follows immediately from~\ref{psilbigonvk}.  Condition~\ref{u'offvk} follows immediately from~\ref{psiloffvk} and~\ref{psikbds}.  Condition~\ref{ukadmissible} follows since by~\ref{intpsil} and~\eqref{intcvg}, 
$$
u_k(b) = \int_a^b \phi_k = \int_{W_0 \cap (a,b)} \phi_k = \int_{W_0 \cap (a,b)} \phi_0 = \int_{W_0 \cap (a,b)}\phi = u(b),
$$
and since clearly $u_k(a) = 0 = u(a)$ by definition.

Let $k \geq 0$, and suppose $ x \in [a,b] \setminus W_k$.  Then either we have $x \leq \tilde{a}_k^i$ for all $1 \leq i \leq n_k$, or  we have for some $1 \leq i_x \leq n_k$ that $\tilde{b}_k^{i_x} \leq x$ and $x \leq \tilde{a}_k^i$ for all $i_x < i \leq n_k$.  In the first case we see immediately that, since $[a,x] \cap W_k = \emptyset$,~\eqref{psioffvk} implies that
$$
u(x) = \int_a^x \phi(t)\, dt = \int_a^x \phi_k (t) \, dt = u_k (x).
$$
Otherwise we argue by~\eqref{intcvg},~\ref{intpsil}, and~\ref{psiloffvk} that
\begin{align*}
u(x) = \int_a^x \phi  = \sum_{i=1}^{i_x}\int_{W_k^i \cap (a,b)} \phi + \int_{[a,x] \setminus W_k} \phi   &= \sum_{i=1}^{i_x} \int_{W_k^i \cap (a,b)} \phi_k + \int_{[a,x] \setminus W_k} \phi_k  \\
& = \int_a^x \phi_k \\
& = u_k(x),
\end{align*}
as required for~\ref{u=ukoffvk}.

Fix $1 \leq i \leq n_k$, and let $ x \in W_k^i$.  Since $u$ and $u_k$ are increasing, using~\ref{u=ukoffvk},~\ref{psikbds}, and~\eqref{measvk} we see that
\begin{align*}
|u_k (x) - u(x)|  \leq u_k(\tilde{b}_k^i) - u(\tilde{a}_k^i) = u_k(\tilde{b}_k^i) - u_k(\tilde{a}_k^i) = \int_{\tilde{a}_k^i}^{\tilde{b}_k^i} \phi_k  & \leq (h_k + 2)(\tilde{b}_k^i - \tilde{a}_k^i) \\
& \leq (h_k + 2)\lambda(V_k) \\
& \leq 2^{-k}.
\end{align*}
Since $u= u_k$ off $W_k$, we then have that $\sup_{x \in [a,b]}|u_k (x) - u (x)| \leq 2^{-k}$, hence $u_k$ converges to $u$ uniformly, as required for~\ref{unif}.
\end{proof}
\section{Construction of the potential}
 The construction of our potential, $\Phi$,  is based on that  which constitutes the proof of Theorem~10 in~\citet{Csornyei-etal-2008}.   We construct a sequence of $C^{\infty}(\mathbb{R}^2)$ functions $\{\Phi^k\}_{k= 0 }^{\infty}$ which have steep gradients on open sets $\Omega_k$ around the graph $U(E)$ of $u$ on $E$.  Because we have fixed the derivative $\phi$ of our minimizer $u$ with which we have to compare the derivatives of $\Phi$, the sets $\Omega_k$ are now given before the construction.  This contrasts with the situation of~\citeauthor{Csornyei-etal-2008}, where the sets could be chosen small enough at each stage of the construction of the sequence.  We have of course carefully chosen $\Omega_k$, or more precisely in fact $V_k$, so that all the properties required at this stage hold with these fixed sets.  

Let $\Omega_0 = \mathbb{R}^2$, and for $k \geq 1$ and $1 \leq i \leq n_k$ define $\Omega_k^i = V_k^i \times u(V_k^i) = V_k^i \times u_k (V_k^i)$,  and $\Omega_k = \bigcup_{i=1}^{n_k} \Omega_k^i$.  So $\Omega_k$ is an open set satisfying $\Omega_k \Supset U(E)$.
\begin{lemma}
\label{omegalemma}
For this definition of the sequence $\{\Omega_k\}_{k=0}^{\infty}$, we have that
\begin{enumerate}[label=(\thelemma.\arabic*), leftmargin=*]
\item $\bigcap_{k=1}^{\infty}\Omega_k = U(E)$; \label{omegashrink}
\listpart{and for each $k \geq 1$,}
\item $\mathrm{dist}(\Omega_k, \mathbb{R}^2 \setminus \Omega_{k-1}) \geq \delta_{k-1}/2$; and \label{dOmktooffOmk-1}
\item $\sum_{i=1}^{n_k} \mathrm{diam}(\Omega_k^i) <  \lambda(V_k)(h_k + 2)$. \label{diamomega}
\end{enumerate}
\end{lemma}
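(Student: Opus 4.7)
The plan is to verify the three assertions in turn, using the definition $\Omega_k^i = V_k^i \times u(V_k^i)$, the distance estimate~\eqref{distvkvk-1}, the pointwise bound $u' \geq 1$ a.e.\ that follows from $\phi_k \geq 1$ in Lemma~\ref{uuklemma}, and continuity plus strict monotonicity of $u$.

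For~\ref{omegashrink}, the inclusion $U(E) \subseteq \bigcap_{k \geq 1}\Omega_k$ is immediate: $E \subseteq V_k$ places each $z \in E$ in some $V_k^i$, whence $U(z) \in V_k^i \times u(V_k^i) = \Omega_k^i$. Conversely, if $(x, y) \in \Omega_k$ for every $k \geq 1$, pick indices $i_k$ with $(x, y) \in \Omega_k^{i_k}$; then $x \in \bigcap_k V_k = E$ by~\eqref{vkshrink} and compactness of $E$, while $y = u(t_k)$ for some $t_k \in V_k^{i_k}$ with $|t_k - x| \leq \mathrm{diam}(V_k^{i_k}) \leq \lambda(V_k) \to 0$, and continuity of $u$ forces $y = u(x)$.

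For~\ref{dOmktooffOmk-1}, fix $(x,y) \in \Omega_k^i$, let $j$ be the unique index with $V_k^i \subseteq V_{k-1}^j$, and take $(x', y') \in \mathbb{R}^2 \setminus \Omega_{k-1}$; the goal is to bound the sup-norm distance $\max(|x-x'|,|y-y'|)$ by case analysis on whether $x'$ lies in $V_{k-1}^j$. If $x' \notin V_{k-1}^j$, then the segment from $x$ to $x'$ exits the interval $V_{k-1}^j$ and so meets $\mathbb{R} \setminus V_{k-1}$, whence $|x - x'| \geq \mathrm{dist}(V_k, \mathbb{R} \setminus V_{k-1}) \geq \delta_{k-1}/2$ by~\eqref{distvkvk-1}. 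If instead $x' \in V_{k-1}^j$, then $(x', y') \notin \Omega_{k-1}$ forces $y' \notin u(V_{k-1}^j) = (u(a_{k-1}^j), u(b_{k-1}^j))$; since $y \in u(V_k^i)$ lies strictly inside this interval, either $|y - y'| \geq u(a_k^i) - u(a_{k-1}^j)$ or $|y - y'| \geq u(b_{k-1}^j) - u(b_k^i)$, and $u' \geq 1$ a.e.\ converts each of these to a one-dimensional distance between $V_k$ and $\mathbb{R} \setminus V_{k-1}$, again at least $\delta_{k-1}/2$.

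For~\ref{diamomega}, the sup-norm diameter is $\mathrm{diam}(\Omega_k^i) = \max(r_k^i, u(b_k^i) - u(a_k^i))$. Since $a_k^i, b_k^i \in [a,b] \setminus W_k$, property~\ref{u=ukoffvk} gives $u(b_k^i) - u(a_k^i) = u_k(b_k^i) - u_k(a_k^i) = \int_{a_k^i}^{b_k^i} \phi_k$, and the bound $\phi_k \leq h_k + 2$ gives this $\leq (h_k+2) r_k^i$, which dominates $r_k^i$ since $h_k \geq 1$. Strict inequality comes from the easy induction $\phi_k \leq h_0 + 1 + \sum_{l=1}^k (h_l - h_{l-1}) = h_k + 1$ (using the bound $\rho_l \leq h_l - h_{l-1}$ from the construction of the $\rho_l$), which gives $\phi_k < h_k + 2$ pointwise and hence $\int_{a_k^i}^{b_k^i} \phi_k < (h_k+2) r_k^i$; summing over $i$ completes the argument. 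Among the three parts, the only moderately delicate point is the two-coordinate case analysis in~\ref{dOmktooffOmk-1}, where strict monotonicity of $u$ (via $u' \geq 1$) is what converts the $y$-coordinate bound back to a distance in $\mathbb{R}$.
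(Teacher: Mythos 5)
Your proof is correct and follows the same basic structure as the paper's: a case analysis on whether the $x$-coordinates separate or the $y$-coordinates must, and a diameter computation via the derivative bound on the $V_k^i$. There are, however, three small points where you diverge, each a modest improvement. For~\ref{omegashrink} you argue directly (picking $t_k \in V_k^{i_k}$ with $u(t_k) = y$ and $t_k \to x$) rather than by contradiction as the paper does; the two are equivalent but yours is slightly cleaner. For~\ref{dOmktooffOmk-1} in the case $x' \in V_{k-1}^j$, the paper uses the stronger lower bound $u' \geq h_{k-1}$ on $V_{k-1}\setminus E$ (from~\ref{psibigonvk}), whereas you use only $u' \geq 1$; both give the required $\delta_{k-1}/2$, but yours makes explicit that the $h_{k-1}$ factor is not needed. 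Most notably, for~\ref{diamomega} you actually establish the stated \emph{strict} inequality by observing $\phi_k \leq h_k+1 < h_k + 2$ pointwise via the easy induction $\phi_k = \phi_{k-1}+\rho_k \leq (h_{k-1}+1)+(h_k-h_{k-1})$; the paper's displayed estimates only yield $\leq$, so your argument genuinely closes a small gap in the paper's proof (though the strictness is never actually used downstream, since the strict bound always comes from~\eqref{measvk}). One minor remark: in the final case analysis for~\ref{dOmktooffOmk-1} you should be a little careful to pass from $|y-y'| \geq u(a_k^i)-u(a_{k-1}^j)$ to $\geq a_k^i - a_{k-1}^j \geq \mathrm{dist}(V_k, \mathbb{R}\setminus V_{k-1})$, noting that $a_{k-1}^j \notin V_{k-1}$ and points of $V_k^i$ approach $a_k^i$; this is correct but worth stating, and your invocation of~\eqref{distvkvk-1} lands the same bound $\delta_{k-1}/2$ as the paper.
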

\begin{proof}
The inclusion $U(E) \subseteq \bigcup_{k=1}^{\infty}\Omega_k$ is clear.  Let $(x, y) \notin U(E)$.  If $x \notin E$ then there exists $k \geq 1$ such that $x \notin V_k$, so $(x,y) \notin \Omega_k$.  Otherwise, $x \in E$ but $y \neq u(x)$.  Since $x \in E$, for all $k \geq 1$ there exists $1 \leq i_k \leq n_k$ such that $x \in V_k^{i_k}$. Since $|b_k^{i_k} - a_k^{i_k}| < \lambda(V_k) \to 0$, there exists $k \geq 1$ such that $|u(b_k^{i_k}) - u(a_k^{i_k})| < |y - u(x)|/2$.  If $y \in (u(a_k^{i_k}), u(b_k^{i_k}))$, then
  $$
  | y - u(x) | \leq | y - u(a_k^{i_k})| + | u ( a_k^{i_k} ) - u(x)| \leq 2| u(b_k^{i_k}) - u(a_k^{i_k})| < | y - u(x)|,
  $$
  which is a contradiction, so $y \notin (u(a_k^{i_k}), u(b_k^{i_k}))$.  Since $x \in (a_k^{i_k}, b_k^{i_k})$ and the components of $V_k$ are pairwise disjoint, this implies that $(x,y) \notin \Omega_k$.

Fix $k \geq 1$, and let $(x_1, y_1) \in \Omega_k^i$ for some $1 \leq i \leq n_k$, but $(x_2, y_2) \notin \Omega_{k-1}$ (the result~\ref{dOmktooffOmk-1} is trivial if $k =1$ and hence no such point exists).  There exists $1 \leq j \leq n_{k-1}$ such that $x_1 \in V_{k-1}^j$, since $\{V_k\}_{k=0}^{\infty}$ are decreasing. First we suppose that $x_2 \notin V_{k-1}^j$.  Then since there must exist at least one point between $x_1$ and $x_2$ which does not lie in $V_{k-1}$, equation~\eqref{distvkvk-1} implies that
$$
\|(x_1, y_1) - (x_2, y_2)\| \geq |x_1 - x_2| \geq \delta_{k-1}/2.
$$
Otherwise $x_1, x_2 \in V_{k-1}^j$.    Notice that by~\ref{psibigonvk} and~\eqref{distvkvk-1} we have
$$
|u(b_{k-1}^j) - u(b_k^i)| \geq h_{k-1} \delta_{k-1}/2 \ \text{and}\ |u(a_{k-1}^j) - u(a_k^i)| \geq h_{k-1} \delta_{k-1}/2.
$$
Since $y_1 \in (u(a_k^i), u(b_k^i))$ but $y_2 \notin (u(a_{k-1}^j), u(b_{k-1}^j))$, this implies that
$$
\|(x_1, y_1) - (x_2, y_2)\| \geq |y_1 - y_2| \geq h_{k-1} \delta_{k-1}/2 \geq \delta_{k-1}/2,
$$
as required for~\ref{dOmktooffOmk-1}.

Finally, for $1 \leq i \leq n_k$,  we easily see using~\ref{u'offvk} that
$$
\mathrm{diam}(\Omega_k^i) \leq   |u_k(b_k^i) - u_k(a_k^i)| \leq (h_k + 2) \lambda(V_k^i),
$$
and hence, since $\{V_k^i\}_{i=1}^{n_k}$ are pairwise disjoint,
$$
\sum_{i=1}^{n_k} \mathrm{diam}(\Omega_k^i) \leq (h_k + 2)\sum_{i=1}^{n_k} \lambda(V_k^i) = (h_k + 2) \lambda(V_k),
$$
as required for~\ref{diamomega}.
\end{proof}
The final step before we construct the potential is to lift our derivative $\phi$ from the real line into the plane, i.e.\ to construct a function $\psi$ on the plane with which we can compare the potential, and which agrees with $\phi$ where necessary, i.e.\ on the graph of $u$.
\begin{lemma}
\label{psilemma}
There exists $\psi \in C^{\infty}(\mathbb{R}^2 \setminus U(E))$ such that
\begin{enumerate}[label=(\thelemma.\arabic*), leftmargin=*]
\item $\psi(x, y) \leq h_k + 2 $ for all $(x, y) \notin \overline{\Omega_{k}}$; \label{psi-1}
\item $\psi(x, y) \geq h_k $ for all $(x, y) \in \overline{\Omega_k} \setminus U(E)$; and \label{psi-2}
\item $\psi(x, u(x)) = u'(x)$ for all $x \in [a,b]\setminus E$. \label{psi-3}
\end{enumerate}
\end{lemma}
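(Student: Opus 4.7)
The plan is to define $\psi$ by blending the $y$-independent extensions $\phi_k(x)$ of the functions from Lemma~\ref{uuklemma}, using cutoff functions adapted to the nested sets $\Omega_k$.  Writing $\phi_k = \phi_0 + \sum_{l=1}^{k}\rho_l$, where $\rho_l = \phi_l - \phi_{l-1}$ is supported in $W_{l-1}$, I set
\[
\psi(x,y) = \phi_0(x) + \sum_{k=1}^{\infty} \chi_k(x,y)\,\rho_k(x)
\]
on $\mathbb{R}^2 \setminus U(E)$, where each $\chi_k \in C^{\infty}(\mathbb{R}^2,[0,1])$ is a cutoff that switches on near the $k$-th level of nesting.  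The heuristic is that at a graph point $(x,u(x))$ the multiplier $\chi_k(x,u(x))$ should reproduce the switching structure of $\phi$ itself, since $\phi = \phi_k$ off $W_k$.

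For Step~1 I first verify two topological facts.  Because $W_k \Subset V_k$ componentwise, each endpoint $a_k^i, b_k^i$ lies outside $W_k$, so $u = u_k$ there and $u(V_k^i) = u_k(V_k^i)$; combined with $V_{k+1} \Subset V_k$ and the strict monotonicity of $u_{k+1}$ this gives $\overline{\Omega_{k+1}} \Subset \Omega_k$ for each $k \geq 0$.  Similarly, $\overline{W_{k-1}} \Subset V_{k-1}$ together with continuity and strict monotonicity of $u$ implies that the graph $G_k = \{(x,u(x)) : x \in \overline{W_{k-1}}\}$ is a compact subset of $\Omega_{k-1}$, at positive distance from $\partial\Omega_{k-1}$.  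For each $k \geq 1$, a standard Urysohn/mollification construction then supplies $\chi_k$ with $\chi_k \equiv 1$ on an open set $M_k \supseteq \overline{\Omega_k} \cup G_k$ satisfying $\overline{M_k} \subset \Omega_{k-1}$ at positive distance from $\partial\Omega_{k-1}$, and $\chi_k \equiv 0$ outside $\Omega_{k-1}$.  Setting $\chi_0 \equiv 1$, pointwise monotonicity $\chi_k \geq \chi_{k+1}$ is automatic: whenever $\chi_{k+1}(x,y) > 0$ the point $(x,y)$ lies in $\Omega_k \subset M_k$, where $\chi_k \equiv 1$.

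Local finiteness of the defining series follows from Lemma~\ref{omegalemma}~\ref{omegashrink}: any $(x_0,y_0) \notin U(E)$ lies outside some $\Omega_K$, hence outside $\Omega_{k-1}$ for all $k \geq K+1$ on a neighborhood on which $\chi_k \equiv 0$.  So $\psi \in C^\infty(\mathbb{R}^2 \setminus U(E))$.  Abel summation rewrites $\psi = \sum_{k \geq 0}(\chi_k - \chi_{k+1})\phi_k(x)$, a pointwise convex combination of the $\phi_k(x)$'s by monotonicity.  For~\ref{psi-1}: if $(x,y) \notin \overline{\Omega_k}$ then $\chi_l(x,y) = 0$ for $l \geq k+1$, so the convex combination runs over $l \leq k$, giving $\psi(x,y) \leq \max_{l \leq k}(h_l + 2) = h_k + 2$ via~\ref{psikbds}.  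For~\ref{psi-2}: if $(x,y) \in \overline{\Omega_k} \setminus U(E)$, take the largest $k' \geq k$ with $(x,y) \in \overline{\Omega_{k'}}$; the sum collapses to $(1 - \chi_{k'+1})\phi_{k'}(x) + \chi_{k'+1}\phi_{k'+1}(x)$, which is at least $h_{k'} \geq h_k$ by~\ref{psilbigonvk} extended by continuity to $\overline{V_{k'}}$.  For~\ref{psi-3}: at $(x,u(x))$ with $k^*$ the largest index satisfying $x \in V_{k^*}$, the sum collapses to $\phi_{k^*}(x) + \chi_{k^*+1}(x,u(x))\,\rho_{k^*+1}(x)$.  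If $x \notin W_{k^*}$ then $\rho_{k^*+1}(x) = 0$ and $\phi(x) = \phi_{k^*}(x)$, matching; if $x \in W_{k^*}$ then $(x,u(x)) \in G_{k^*+1} \subset M_{k^*+1}$, so $\chi_{k^*+1}(x,u(x)) = 1$, giving $\psi(x,u(x)) = \phi_{k^*+1}(x) = \phi(x)$ (as $W_{k^*+1} \subset V_{k^*+1}$ and $x \notin V_{k^*+1}$).

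The main obstacle is Step~1: arranging $\chi_k$ to equal $1$ not only on $\overline{\Omega_k}$ but on a neighborhood of the graph $G_k$, part of which sticks out of $\overline{\Omega_k}$.  The strict containment $\overline{W_{k-1}} \Subset V_{k-1}$, forced by the buffer $V_{k+1} \Subset W_k$ built into the construction, keeps $G_k$ uniformly away from $\partial\Omega_{k-1}$ at the corner points $(a_{k-1}^i, u(a_{k-1}^i))$ and $(b_{k-1}^i, u(b_{k-1}^i))$; without this buffer the graph would approach those corners and no smooth cutoff with the required values could exist.
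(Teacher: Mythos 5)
Your construction works and the verification is correct, but it is organized differently from the paper's. The paper constructs $\psi$ as the limit of an inductively blended sequence $\psi_k = (1-\pi_k)\psi_{k-1} + \pi_k\phi_k$, where $\pi_k$ is a cutoff that equals $1$ on the auxiliary rectangles $\tilde\Omega_{k-1} = \bigcup_i W_{k-1}^i\times u(W_{k-1}^i)$ and vanishes off $\Omega_{k-1}$; it then verifies a list of inductive properties (in particular $\psi_k(x,u_k(x)) = u_k'(x)$ globally on $[a,b]$) and passes to the limit using~\ref{omegashrink}. You instead write $\psi$ as a single telescoping series $\phi_0 + \sum_k\chi_k\rho_k$ with monotone cutoffs $\chi_k$ that are $1$ on a neighbourhood of $\overline{\Omega_k}\cup G_k$ and vanish off $\Omega_{k-1}$, and then read off all three properties at once from the Abel-summed form $\sum(\chi_k - \chi_{k+1})\phi_k$ as a locally-finite convex combination of the $\phi_k$. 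Both proofs exploit exactly the same geometric buffer ($W_{k-1}\Subset V_{k-1}$ and $V_k\Subset W_{k-1}$, hence $\overline{\Omega_k}\Subset\Omega_{k-1}$ and $G_k\Subset\Omega_{k-1}$) to make room for the cutoffs, and the functions produced agree with $\phi_k$ on $\Omega_k$. What your route buys is that the convex-combination structure becomes manifest, so~\ref{psi-1} and~\ref{psi-2} fall out from the pointwise bounds~\ref{psikbds} and~\ref{psilbigonvk} on the $\phi_l$ without needing the intermediate inductive estimates~\ref{psik-1}, \ref{psik-2}; the cost is that you must arrange $\chi_k\equiv 1$ simultaneously on $\overline{\Omega_k}$ (for monotonicity of the cutoffs and hence nonnegativity of the Abel weights) and on the graph piece $G_k$ (for~\ref{psi-3}), which is a slightly more delicate Urysohn construction than the paper's (which only needs $\pi_k\equiv 1$ on the single set $\tilde\Omega_{k-1}$). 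Your case analysis in~\ref{psi-3} (splitting on $x\in W_{k^*}$ versus $x\notin W_{k^*}$, using that $\rho_{k^*+1}$ vanishes outside $W_{k^*}$) correctly replaces the paper's inductive verification of $\psi_k(x,u_k(x))=u_k'(x)$, and your identification of $\bigcap_k\overline{\Omega_k}=U(E)$ (via $\overline{\Omega_{k+1}}\Subset\Omega_k$) is the right way to make the largest index $k'$ in the proof of~\ref{psi-2} well-defined.
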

\begin{proof}
We construct a sequence $\{ \psi_k\}_{k=0}^{\infty}$ of functions $\psi_k \in C^{\infty}(\mathbb{R}^2)$ such that for $k \geq 0$,
\begin{enumerate}[label=(\thelemma.\alph*),leftmargin=*]
\item $\psi_k(x, y) \leq h_k + 2$ for all $(x, y) \in \mathbb{R}^2$; \label{psik-1}
\item $\psi_k(x, u_k(x)) = u_k'(x)$ for all $x \in [a,b]$; \label{psik-3}
\item $\psi_k(x, y) = \phi_k(x)$ for all $(x, y) \in \Omega_k$, where $\phi_k$ is as constructed in the proof of Lemma~\ref{uuklemma}; \label{psik-4}
\listpart{and for $k \geq 1$,}
\item $\psi_k (x, y) = \psi_{k-1}(x,y) $ for all $(x, y) \notin \overline{\Omega_{k-1}}$; and\label{psik-0}
\item $\psi_k(x, y) \geq h_{k-1}$ for all $(x,y) \in \Omega_{k-1}$. \label{psik-2}
\end{enumerate}
Defining $\psi_0 = h_0 + 1$ satisfies all the conditions~\ref{psik-1}--\ref{psik-4}.  Suppose $\psi_{k-1}$ has been constructed as required, for $k \geq 1$.  It is at this point that the positive distance between $W_k$ and $\mathbb{R} \setminus V_k$ becomes useful.  We define a new sequence of open sets $\{\tilde{\Omega}_k\}_{k=0}^{\infty}$ in $\mathbb{R}^2$ such that $U(E) \subseteq \tilde{\Omega}_k \subseteq \Omega_k$ and $\tilde{\Omega}_k \supseteq \Omega_{k+1}$ by setting $\tilde{\Omega}_0 = \mathbb{R}^2$, and for $k \geq 1$,  $\tilde{\Omega}_k^i = W_k^i \times u(W_k^i) = W_k^i \times u_k (W_k^i)$, and $\tilde{\Omega}_k = \bigcup_{i=1}^{n_k} \tilde{\Omega}_k^i$.  Choose a function $\pi_k \in C^{\infty}(\mathbb{R}^2)$ such that $0 \leq \pi_k \leq 1$ on $\mathbb{R}^2$, $\pi_k = 0$ off $\Omega_{k -1}$, and $\pi_k = 1$ on $\tilde{\Omega}_{k-1}$.  Using $\phi_k \in C^{\infty}(\mathbb{R})$  from the proof of Lemma~\ref{uuklemma}, we define
$$
\psi_k(x, y) = \psi_{k-1}(x, y) + \pi_k (x, y) ( \phi_k(x) - \psi_{k - 1}(x, y)).
$$
Condition~\ref{psik-0} is immediate.  Since $\Omega_k \subseteq \tilde{\Omega}_{k-1}$, we see that $\psi_k (x,y) = \phi_k(x)$ for $(x,y) \in \Omega_k$, as required for~\ref{psik-4}.

We note that by inductive hypothesis~\ref{psik-1} and~\ref{psikbds},
$$
\psi_k = ( 1- \pi_k) \psi_{k-1} + \pi_k \phi_k \leq ( 1- \pi_k) (h_{k-1} + 2) + \pi_k (h_k + 2)
 \leq h_k + 2,
$$
since $\{h_k\}_{k=0}^{\infty}$ are increasing, as required for~\ref{psik-1}.  Now let $(x, y) \in \Omega_{k-1}$, to check~\ref{psik-2}.  Using inductive hypothesis~\ref{psik-4} and~\ref{psilbigonvk} we see that
$$
\psi_k= ( 1- \pi_k) \psi_{k-1}+ \pi_k \phi_k = ( 1- \pi_k) \phi_{k-1} + \pi_k \phi_k  \geq (1 - \pi) h_{k-1} + \pi h_{k-1}  = h_{k-1},
$$
as required.

For~\ref{psik-3} we need to consider cases.  First suppose $x \notin V_{k-1}$, so $(x, u_k(x)) \notin \Omega_{k-1}$.  Then by~\ref{u=ukoffvk} and inductive hypothesis~\ref{psik-3}
$$
\psi_k (x, u_k(x)) = \psi_{k-1}(x, u_k (x)) = \psi_{k-1}(x, u_{k-1}(x))  = u_{k-1}'(x)= u_k'(x),
$$
as required.  For $x \in W_{k-1}$, we see that then $(x, u_k(x)) \in \tilde{\Omega}_{k-1}$, and so
$$
\psi_k(x, u_k(x)) = \psi_{k-1} (x, u_k(x)) + u_k'(x) - \psi_{k-1}(x, u_k(x)) = u_k'(x),
$$
as required.  The final case is for $x \in V_{k-1} \setminus W_{k-1}$, in which case we argue that by~\ref{u=ukoffvk} and inductive hypothesis~\ref{psik-3},
\begin{align*}
\psi_k(x, u_k(x))
& = \psi_{k-1}(x, u_k(x)) + \pi_k (x, u_k(x)) ( u_k'(x) - \psi_{k-1}(x, u_k(x))) \\
& = \psi_{k-1}(x, u_k(x)) + \pi_k(x, u_k(x))(u_{k-1}'(x) - \psi_{k-1}(x, u_{k-1}(x)))\\
& = \psi_{k-1}(x, u_{k-1}(x))\\
& = u_{k-1}'(x) \\
& = u_k'(x),
\end{align*}
as required.  Hence the result in general.

Let $(x,y) \in \mathbb{R}^2 \setminus U(E)$.  By~\ref{omegashrink} there exists $k \geq 1$ such that $(x,y) \notin \overline{\Omega_{k-1}} \setminus \overline{\Omega_k}$.  Then~\ref{psik-0} implies that $\lim_{l \to \infty} \psi_l $ exists and equals $\psi_k$ on an open set around $(x,y)$.  Hence $\psi_k$ converges to a function $\psi \in C^{\infty}(\mathbb{R}^2 \setminus U(E))$ such that $\psi = \psi_k$ on $\overline{\Omega_{k-1}} \setminus \overline{\Omega_k}$.  Condition~\ref{psi-1} follows from~\ref{psik-1}, and condition~\ref{psi-2} follows from~\ref{psik-2}.  For~\ref{psi-3}, we let $x \in [a,b] \setminus E$, find $k\geq 1$ such that $x \in \overline{V_{k-1}} \setminus \overline{V_k}$ and hence that $(x, u(x)) \in \overline{\Omega_{k-1}} \setminus \overline{\Omega_k}$, and use~\ref{u=ukoffvk} and~\ref{psik-3} to see that
$$
\psi(x, u(x)) = \psi_k(x, u(x)) = \psi_k(x, u_k(x)) = u_k'(x) = u'(x),
$$
as required.
\end{proof}
We now state and prove appropriate versions of Lemmas~12 and~13 in~\citet{Csornyei-etal-2008}.  For two vectors $x, y \in \mathbb{R}^2$, we write $[x, y]$ to denote the line segment in $\mathbb{R}^2$ connecting them.
\begin{lemma}
\label{lemma12'}
Let $\tau > 0$, $ e \in \mathbb{R}^2 \setminus \{0\}$, and suppose $\Omega \subseteq \mathbb{R}^2$ is an  open set such that $\Omega = \bigcup_{i=1}^{\infty} \Omega_i$ such that $\sum_{i=1}^{\infty} \mathrm{diam}(\Omega_i) < \frac{\tau}{2 \|e\|^2}$.

Then there exists $f \in C^{\infty}(\mathbb{R}^2)$ such that
\begin{itemize}
\item $0 \leq f(x) \leq \frac{\tau}{\|e\|}$ for all $ x \in \mathbb{R}^2$;
\item $ \mathrm{dist}(\nabla f(x) , [0, e]) < \tau$ for all $x \in \mathbb{R}^2$; and
\item $\|\nabla f(x) - e \| < \tau$ for $x \in \overline{\Omega}$.
\end{itemize}
\end{lemma}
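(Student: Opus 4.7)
The plan is to reduce this to a one-dimensional problem by taking $f$ to depend on $x$ only through the scalar projection $\pi(x) := \langle e, x \rangle$. If $f(x) = F(\pi(x))$ for some $F \in C^{\infty}(\mathbb{R})$, then $\nabla f(x) = F'(\pi(x))\,e$, so the gradient always lies on the ray through $e$; it belongs to $[0,e]$ whenever $F'(\pi(x)) \in [0,1]$, and equals $e$ exactly when $F'(\pi(x)) = 1$. All three desired conclusions thus reduce to choosing a suitable one-variable $F$.

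First I would estimate the measure of the shadow $\pi(\overline{\Omega}) \subseteq \mathbb{R}$. Writing $e = (e_1, e_2)$, linearity of $\pi$ gives $|\pi(x) - \pi(y)| = |\langle e, x-y \rangle| \leq (|e_1| + |e_2|)\|x-y\| \leq 2\|e\|\,\|x-y\|$, so each interval $\pi(\overline{\Omega_i})$ has length at most $2\|e\|\,\mathrm{diam}(\Omega_i)$. The hypothesis $\sum_i \mathrm{diam}(\Omega_i) < \tau/(2\|e\|^2)$ is tuned precisely so that $\sum_i \mathrm{diam}(\pi(\overline{\Omega_i})) < \tau/\|e\|$. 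Enlarging each $\pi(\overline{\Omega_i})$ slightly (using outer regularity of Lebesgue measure) and taking the union produces an open set $U \subseteq \mathbb{R}$ containing $\pi(\overline{\Omega})$ with $\lambda(U) < \tau/\|e\|$.

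Then I would take $F \in C^{\infty}(\mathbb{R})$ non-decreasing, with $0 \leq F' \leq 1$, equal to $1$ on an open neighbourhood of $\pi(\overline{\Omega})$ and supported in $U$, obtained by mollifying $\chi_{U'}$ for an appropriately chosen intermediate $U' \Subset U$. Setting $F(s) = \int_{-\infty}^s F'(t)\,dt$ gives $0 \leq F \leq \int_{\mathbb{R}} F' \leq \lambda(U) < \tau/\|e\|$. Defining $f(x) = F(\pi(x))$ then yields all three conclusions at once: the first bullet from the bound on $F$; the second because $\nabla f = F'(\pi(x))\,e$ is a scalar multiple of $e$ with coefficient in $[0,1]$, hence in $[0,e]$ with distance $0 < \tau$; and the third because on $\overline{\Omega}$ one has $F'(\pi(x)) = 1$, so $\nabla f = e$ and $\|\nabla f - e\| = 0 < \tau$.

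The main technical step is the shadow estimate: one must ensure that $\pi(\overline{\Omega})$, not merely $\pi(\Omega)$, is contained in the open set $U$ of controlled measure, which requires some care in enlarging each $\pi(\overline{\Omega_i})$ to absorb possible limit points on the line. In the intended applications of this lemma the index set of the covering is in fact finite, so $\overline{\Omega} = \bigcup_i \overline{\Omega_i}$ and the containment is immediate from the Lipschitz estimate above.
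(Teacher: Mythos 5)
Your proposal is correct for the case actually needed in the paper (a finite decomposition of $\Omega$), and it takes a genuinely different and arguably cleaner route than the paper's. The paper reduces by rotation and scaling to $e=(1,0)$, then (following Cs\"ornyei et al.) constructs a Lipschitz $g$ as a supremum of $x - b + \int(1-2|\gamma'|/\tau)$ over Lipschitz curves $\gamma$, which gives $g_x = 1$ on $\Omega$, $g_x\in[0,1]$, $g_y\in[-\tau/2,\tau/2]$, and then mollifies. Your observation that one can instead set $f(x) = F(\langle e,x\rangle)$ eliminates the curve supremum entirely: since $\nabla f = F'(\langle e,x\rangle)\,e$, the direction constraint is automatic, and you obtain $\mathrm{dist}(\nabla f,[0,e])=0$ everywhere and $\nabla f = e$ on $\overline{\Omega}$ exactly, not merely up to $\tau$. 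Your numerics are right: $\langle e,\cdot\rangle$ is $2\|e\|$-Lipschitz for the sup norm, so the hypothesis gives $\sum_i \mathrm{diam}(\pi(\overline{\Omega_i})) < \tau/\|e\|$, and $0\le F = \int_{-\infty}^{\cdot}F' \le \lambda(U) < \tau/\|e\|$. You are also right to flag the passage from $\pi(\Omega)$ to $\pi(\overline{\Omega})$: for a genuinely infinite decomposition this step cannot in general be repaired by ``enlarging each piece slightly'', since $\overline{\bigcup_i\Omega_i}$ may be far larger than any union $\bigcup_i\Omega_i'$ of small enlargements (e.g.\ balls of summable radii around a dense sequence). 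But this is an issue in the paper's own sketch as well, which likewise needs to expand the $\Omega_i$ before mollifying so as to reach $\overline{\Omega}$; in both arguments the gap closes because the lemma is only invoked with $\Omega = \Omega_k = \bigcup_{i=1}^{n_k}\Omega_k^i$, a finite union of bounded open sets, so that $\overline{\Omega}=\bigcup_i\overline{\Omega_i}$ and the Lipschitz bound applies directly.
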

\begin{proof}
We first show that it suffices to prove the result for $e = (1,0)$.  For an arbitrary $e \in \mathbb{R}^2$, find a rotation $R \colon \mathbb{R}^2 \to \mathbb{R}^2$ such that $\|e\|^{-1}Re = (1, 0)$.  Then
$$
\sum_{i=1}^{\infty} \mathrm{diam}( \|e\| R (\Omega_i)) = \|e\| \sum_{i=1}^{\infty} \mathrm{diam} ( \Omega_i) < \frac{\tau}{2 \|e \|},
$$
so $\|e\|R(\Omega)$ satisfies the assumptions for $\tilde{e} \mathrel{\mathop:}= (1,0)$ and $\tilde{\tau} \mathrel{\mathop:}= \frac{\tau}{\|e\|}$.

So by assumption there exists $\tilde{f} \in C^{\infty}(\mathbb{R}^2)$ satisfying the three conclusions for $\tilde{e}$ and $\tilde{\tau}$.  Define  $f \in C^{\infty}(\mathbb{R}^2)$ by $f(x) = \tilde{f}(\|e\|Rx)$.  Fix $ x \in \mathbb{R}^2$.  Then firstly
$$
0 \leq f(x) = \tilde{f}(\|e\|Rx) \leq \frac{\tilde{\tau}}{\|(1,0)\|}  = \frac{\tau}{\|e\|}.
$$
By assumption there exists $s \in [0,1]$ such that
$$
\left\| \nabla \tilde{f}(\|e\|Rx) - s(1,0)\right\| < \tilde{\tau}.
$$
Then for this $s \in [0,1]$ we have
\begin{align*}
\|\nabla f (x) - s e\| & = \left\|\|e\| \nabla \tilde{f}(\|e\| Rx)R - s\|e\|\|e\|^{-1}eR^{-1}R \right\| \\
& = \left\| \|e\|(\nabla \tilde{f}(\|e\|Rx) - s\|e\|^{-1}eR^{-1})\right\| \\
& =\|e\| \left\| \nabla \tilde{f}(\|e\|Rx) - s(1,0)\right\| \\
& < \|e\| \tilde{\tau} \\
& = \tau.
\end{align*}
Now let $ x \in \overline{\Omega}$.  Then $\|e\|^{-1}Rx \in \|e\|^{-1}R \overline{\Omega}$, so by assumption we have that
$$
\left\|\nabla \tilde{f} (\|e\|^{-1}Rx) - (1,0)\right\| < \tilde{\tau}.
$$
Thus
\begin{align*}  \| \nabla f (x) - e \| & = \left\| \|e\| \nabla \tilde{f}(\|e\| Rx)R - \|e\| \|e\|^{-1}  eR^{-1}R\right\| \\
& = \|e\| \left\| \nabla \tilde{f} (\|e\| Rx) - (1,0)\right\| \\
& < \|e\| \tilde{\tau} \\
& = \tau,
\end{align*}
as required.

So we can indeed assume without loss of generality that $e = (1,0)$.  By expanding each $\Omega_k^i$ slightly so that the inequality $\sum_{i=1}^{\infty} \mathrm{diam}( \Omega_k^i) < \tau/2$ is retained, and using a suitable mollification, it suffices to construct a Lipschitz function $g \colon \mathbb{R}^2 \to \mathbb{R}$ such that
\begin{itemize}
\item $0 \leq g (x) \leq \tau/2$ for all $x \in \mathbb{R}^2$;
\item $g_x (x) \in [0,1]$ and $g_y (x) \in [ -\frac{\tau}{2},  \frac{\tau}{2}]$ for every $x \in \mathbb{R}^2$; and
\item $g_x (x) =1 $ for $x \in \Omega$.
\end{itemize}
To do this, we first note that for any function $\gamma \colon \mathbb{R} \to \mathbb{R}$,
$$
\lambda( \{ s \in \mathbb{R} : (s, \gamma (s)) \in \Omega\}) \leq \sum_{i=1}^{\infty}\lambda ( \{s \in \mathbb{R}: (s, \gamma(s)) \in  \Omega_i\}) \leq \sum_{i=1}^{\infty} \mathrm{diam}(\Omega_i) < \frac{\tau}{2}.
$$
In particular, defining $g \colon \mathbb{R}^2 \to \mathbb{R}$ just as in~\citet{Csornyei-etal-2008}, by
$$
g(x,y) = \sup \left\{ x - b + \int_{\{ s \in \mathbb{R} : (s, \gamma (s)) \in \Omega\}} \left( 1 -  \frac{2 |\gamma'(s)|}{\tau}\right)\, ds \right\},
$$
where the supremum is taken over all $b \in \mathbb{R}$ such that $b \geq x$ and all $ \gamma \colon (-\infty, b] \to \mathbb{R}$ such that $\mathrm{Lip}(\gamma) < \frac{2}{\tau}$ and $\gamma(b) = y$, satisfies the requirements just as proved in~\citet{Csornyei-etal-2008}.
\end{proof}
\begin{lemma}
\label{lemma13'}
Let $\epsilon > 0$, $e^0, e^1 \in \mathbb{R}^2$ be distinct vectors, and $\Omega \Subset \Omega' \subseteq \mathbb{R}^2$ be open sets such that  $\Omega = \bigcup_{i=1}^{\infty} \Omega_i$ such that
$$
\sum_{i=1}^{\infty} \mathrm{diam} ( \Omega_i) < \frac{\epsilon/ (2 \|e^0 - e^1\|^2)}{(1 + ( \|e^0 - e^1\| \delta)^{-1})},
$$
where $0 < \delta \leq \mathrm{dist}(\Omega, \mathbb{R}^2 \setminus \Omega')/2$.  Let $g^0 \in C^{\infty}(\mathbb{R}^2)$.

Then there exists $g^1 \in C^{\infty}(\mathbb{R}^2)$ such that
\begin{itemize}
\item $\|g^1 - g^0\|_{\infty} < \frac{\epsilon}{ \|e^0 - e^1\|}$;
\item $g^1 = g^0$ off $\Omega'$;
\item $\mathrm{dist}(\nabla g^1 (x), [e^0, e^1]) < \epsilon + \| \nabla g^0 (x)- e^0\|$ for all $x \in \mathbb{R}^2$; and
\item $\| \nabla g^1 (x) - e^1 \| < \epsilon + \| \nabla g^0 (x)- e^0 \|$ for all $x \in \overline{\Omega}$.
\end{itemize}
\end{lemma}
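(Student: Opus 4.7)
The plan is to construct $g^1$ as a localized perturbation $g^1 := g^0 + \chi f$, where $f$ is furnished by Lemma~\ref{lemma12'} applied to the vector $e := e^1 - e^0$, and $\chi \in C^{\infty}(\mathbb{R}^2)$ is a smooth cutoff equal to $1$ in a neighbourhood of $\overline{\Omega}$ and vanishing off $\Omega'$. Set $\tau := \epsilon / \bigl(1 + (\|e^0 - e^1\|\delta)^{-1}\bigr)$, so that the hypothesis on $\sum_i \mathrm{diam}(\Omega_i)$ becomes precisely $\sum_i \mathrm{diam}(\Omega_i) < \tau/(2\|e\|^2)$, permitting an invocation of Lemma~\ref{lemma12'} to produce $f \in C^{\infty}(\mathbb{R}^2)$ with $0 \leq f \leq \tau/\|e\|$, $\mathrm{dist}(\nabla f, [0, e]) < \tau$ throughout $\mathbb{R}^2$, and $\|\nabla f - e\| < \tau$ on $\overline{\Omega}$. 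Using that $\mathrm{dist}(\Omega, \mathbb{R}^2 \setminus \Omega') \geq 2\delta$, we additionally arrange $\|\nabla\chi\|_{\infty} \leq 1/\delta$, for example by mollifying the truncated distance function $x \mapsto \max\{0, \min\{1, 2 - \mathrm{dist}(x, \Omega)/\delta\}\}$ with a sufficiently small bump.

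With $g^1 := g^0 + \chi f$, the first two conclusions are immediate: $g^1 = g^0$ wherever $\chi = 0$, hence off $\Omega'$, and $\|g^1 - g^0\|_{\infty} \leq \|f\|_{\infty} \leq \tau/\|e\| < \epsilon/\|e^0 - e^1\|$ since $\tau < \epsilon$. For the fourth conclusion, on $\overline{\Omega}$ the cutoff $\chi$ equals $1$ in a neighbourhood, so $\nabla\chi = 0$ there and $\nabla g^1 = \nabla g^0 + \nabla f$; the triangle inequality then gives
\[
\|\nabla g^1 - e^1\| \leq \|\nabla g^0 - e^0\| + \|\nabla f - e\| < \|\nabla g^0 - e^0\| + \tau \leq \|\nabla g^0 - e^0\| + \epsilon.
\]

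The substantive estimate is the third, global bound on $\mathrm{dist}(\nabla g^1, [e^0, e^1])$. We write $\nabla g^1 = \nabla g^0 + f\nabla\chi + \chi \nabla f$, and for each $x \in \mathbb{R}^2$ select $s = s(x) \in [0, 1]$ with $\|\nabla f(x) - s e\| < \tau$, as provided by Lemma~\ref{lemma12'}. Since $\chi(x) s(x) \in [0,1]$, the vector $e^0 + \chi(x) s(x)(e^1 - e^0)$ lies in $[e^0, e^1]$, and
\[
\nabla g^1 - \bigl(e^0 + \chi s (e^1 - e^0)\bigr) = (\nabla g^0 - e^0) + f\nabla\chi + \chi(\nabla f - se).
\]
The triangle inequality together with $|f| \leq \tau/\|e\|$, $\|\nabla\chi\| \leq 1/\delta$, and $\|\nabla f - se\| < \tau$ bounds the right-hand side in norm by
\[
\|\nabla g^0 - e^0\| + \tau/(\delta\|e\|) + \chi\tau \leq \|\nabla g^0 - e^0\| + \tau\bigl(1 + (\delta\|e\|)^{-1}\bigr) = \|\nabla g^0 - e^0\| + \epsilon,
\]
with strict inequality in the case $\chi(x) > 0$ (since then $\chi\|\nabla f - se\| < \chi\tau$) and also in the case $\chi(x) = 0$ (since $\tau/(\delta\|e\|) = \epsilon/(1 + \delta\|e\|) < \epsilon$). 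The main obstacle is this bookkeeping of constants: the cutoff unavoidably contributes a spurious gradient term $f\nabla\chi$ of size $\tau/(\delta\|e\|)$, and the hypothesis on $\sum_i \mathrm{diam}(\Omega_i)$ is calibrated exactly so that $\tau$ may be taken simultaneously small enough to absorb this into $\epsilon$ and large enough to satisfy the precondition of Lemma~\ref{lemma12'}.
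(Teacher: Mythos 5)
Your proof is correct and follows essentially the same route as the paper: set $\tau = \epsilon/(1 + (\delta\|e^0-e^1\|)^{-1})$, apply Lemma~\ref{lemma12'} with $e = e^1 - e^0$ to produce $f$, choose a cutoff $\chi \in C^{\infty}(\mathbb{R}^2)$ vanishing off $\Omega'$ with $0 \leq \chi \leq 1$, $\chi = 1$ on $\Omega$, and $\|\nabla\chi\| \leq \delta^{-1}$, and put $g^1 = g^0 + \chi f$; your final estimates, including the explicit choice of $s(x)$ and the check of strictness, match the paper's bounds exactly. The one small nit is that your explicit pre-mollification formula $\max\{0,\min\{1,\,2 - \mathrm{dist}(x,\Omega)/\delta\}\}$, once convolved with a bump of radius $\eta$, need not vanish on all of $\mathbb{R}^2 \setminus \Omega'$ (the zero set $\{x : \mathrm{dist}(x,\Omega) \geq 2\delta\}$ erodes to $\{x : \mathrm{dist}(x,\Omega) \geq 2\delta + \eta\}$, and you are only guaranteed $\mathrm{dist}(\Omega,\mathbb{R}^2\setminus\Omega')\geq 2\delta$); tightening the thresholds, say to $\max\{0,\min\{1,\,3/2 - \mathrm{dist}(x,\Omega)/\delta\}\}$, repairs this while keeping $\|\nabla\chi\|\leq\delta^{-1}$, and in any case the paper simply asserts the existence of such a $\chi$ without construction.
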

\begin{proof}
Let $\tau \mathrel{\mathop:}= \frac{\epsilon}{1+ (\delta \|e^0 - e^1\|)^{-1}}$ and apply Lemma~\ref{lemma12'} with this $\tau$, the set $\Omega$ as given, and vector $e \mathrel{\mathop:}= e^1 - e^0$.  Let $f \in C^{\infty}(\mathbb{R}^2)$ be the resulting function.

Choose $ \chi\in C^{\infty}(\mathbb{R}^2)$ such that $0 \leq \chi \leq 1$, $ \chi=1$ on $\Omega$, and $\chi=0$ off $\Omega'$, and $\| \nabla \chi \| \leq \delta^{-1}$.  Define $g^1 = g^0 + \chi f$.  Clearly $g^1 \in C^{\infty}(\mathbb{R}^2)$. We see immediately from Lemma~\ref{lemma12'} that
$$
\|g^1 - g^0\|_{\infty} \leq \|f\|_{\infty} \leq \frac{\tau}{ \|e^1 - e^0\|} < \frac{\epsilon}{ \|e^1 - e^0\|}.
$$
Also note, by the properties of $\chi$ and Lemma~\ref{lemma12'},  we have for $x \in \mathbb{R}^2$ that
\begin{align*}
\mathrm{dist}(\nabla g^1(x), [e^0, e^1]) & \leq \mathrm{dist} (\nabla g^0 (x)- e^0 + (\chi \nabla f) (x)+ (f \nabla \chi)(x), [0,e^1 - e^0]) \\
& \leq \| \nabla g^0(x) - e^0 \| + \mathrm{dist}( \nabla f (x), [0,e]) + \| (f \nabla \chi)(x)\|\\
& \leq \| \nabla g^0(x) - e^0 \| + \tau + \frac{\tau}{ \delta\|e^0 - e^1\|}\\
& = \| \nabla g^0 (x)- e^0 \| + \epsilon.
\end{align*}
For $x \in \overline{\Omega}$ we have, since $\chi(x) = 1$, that
\begin{align*}
\| \nabla g^1 (x)- e^1\| & \leq \| \nabla g^0 (x)- e^0\| + \| (\chi \nabla f )(x)- (e^1 - e^0)\| + \| (f \nabla \chi)(x)\| \\
& \leq \| \nabla g^0 (x)- e^0\| + \| \nabla f (x)- e\| + \frac{\tau}{ \delta\|e^0 - e^1\|}\\
& \leq \| \nabla g^0 (x)- e^0\| + \tau ( 1 + \frac{1}{\delta\|e^0 - e^1\|}) \\
& = \| \nabla g^0 (x)- e^0\| + \epsilon.\qedhere
\end{align*}
\end{proof}
We now construct a potential  $\Phi$ which will  satisfy the conditions of Lemma~\ref{lemma11'} with the function $\psi$ given by Lemma~\ref{uuklemma}.  We now assign values to our increasing sequences, and define two new sequence $\{t_k\}_{k = 1}^{\infty}$ and $\{B_k\}_{k=0}^{\infty}$ by setting for $k \geq 0$:
\begin{itemize}
\item $h_k = 10 (3 +  2^{k+1})$;
\item $t_k = 3 + 2^k$;
\item $B_k = 4 + 320\omega'(h_{k+1} + 2)$, and $A_k = 3 t_k B_k$.
\end{itemize}
Also for $k \geq 0$, define numbers $\eta_k = 1-2^{-k-1}> 0$ and $\epsilon_k = 2^{-k}(4n_k)^{-1} > 0$, and vector $e_k = (-A_k, B_k) \in \mathbb{R}^2$.  We inductively construct a sequence $\{\Phi^k\}_{k=0}^{\infty}$ of functions $\Phi^k \in C^{\infty}(\mathbb{R}^2)$ satisfying, for $ k \geq 0$,
\begin{gather}
 \| \nabla \Phi^k (x,y)- e_k\| < \eta_k \ \textrm{for all $(x,y) \in \overline{\Omega_k}$;} \label{25'}\\
 \shortintertext{and for $k \geq 1$,}
 \|\Phi^k - \Phi^{k-1} \|_{\infty} < \epsilon_{k-1}; \label{27'}\\
 \Phi^k = \Phi^{k-1} \ \textrm{off $\Omega_{k-1}$; and} \label{28'}\\
 \mathrm{dist} (\nabla \Phi^k (x,y), [e_{k-1}, e_k]) < \eta_k \ \textrm{for all $(x,y) \in  \overline{\Omega_{k-1}}.$} \label{29'}
\end{gather}
We define $\Phi^0(x,y) = -A_0x + B_0y$, which clearly satisfies~\eqref{25'}.  Suppose for $k \geq 1$ that we have constructed $\Phi^{k-1}$ as claimed. To construct $\Phi^k$ we apply Lemma~\ref{lemma13'} with data $\epsilon = \epsilon_{k-1}$, $e^0 = e_{k-1}$, $e^1 = e_k$, $\Omega = \Omega_k$, $\Omega' = \Omega_{k-1}$,  $\delta = \delta_{k-1}/4$, and $g^0 = \Phi^{k-1}$.  We must check that the assumptions of the Lemma hold with these values.
First recall that~\ref{dOmktooffOmk-1} gives that $\delta_{k-1}/{4} < \mathrm{dist} ( \Omega_k, \mathbb{R}^2 \setminus \Omega_{k-1})/2 $ indeed.   We see by~\ref{diamomega} and~\eqref{measvk} that
\begin{align*}
\sum_{i=1}^{n_k}\mathrm{diam}( \Omega_k^i)
& \leq  \lambda (V_k) (h_k + 2) \\
& < \frac{2^{-(k-1)}(4n_{k-1})^{-1}/ 2 A_k^2}{ 1 + 4\delta_{k-1}^{-1}}\\
 & < \frac{\epsilon_{k-1}/2 \|e^k - e^{k-1}\|^2}{ 1+ 4 ( \delta_{k-1} \|e^k - e^{k-1}\|^2)^{-1}}.
\end{align*}
We define $\Phi^k$ as the function $g^1$ given by the Lemma.

Then~\eqref{28'} is immediate, and since $\|e_k - e_{k-1}\| \geq 1$, we see that $\| \Phi^k - \Phi^{k-1}\|_{\infty} < \epsilon_{k-1}$ as required for~\eqref{27'}.  For~\eqref{29'} we let $(x,y) \in \overline{\Omega_{k-1}}$ and use inductive hypothesis~\eqref{25'} and the properties given by Lemma~\ref{lemma13'} to see that
\begin{align*}
\mathrm{dist}(\nabla \Phi^k (x,y) , [e_{k-1}, e_k]) & < \epsilon_{k-1} + \| \nabla \Phi^{k-1}(x,y) - e_{k-1}\| \\
& \leq \epsilon_{k-1} + \eta_{k-1} \\
& \leq 2^{-(k-1 + 2)} + 1- 2^{-k}\\
& = \eta_k.
\end{align*}
Similarly for~\eqref{25'}, we let $(x,y) \in \overline{\Omega_{k}}$ and use the Lemma and the inductive hypothesis~\eqref{25'} again, noting that $\Omega_k \subseteq \Omega_{k-1}$, to see that
$$
\| \nabla \Phi^k (x,y) - e_k\|_{2}  < \epsilon_{k-1} + \| \nabla \Phi^{k-1} (x,y) - e_{k-1}\|_{2}  < \eta_k.
$$
Hence we can construct such a sequence $\{\Phi^k\}_{k=0}^{\infty}$ as claimed.  We now check that this gives us the potential we require for Lemma~\ref{lemma11'}, with $S = U(E)$. By~\eqref{27'} and since $\epsilon_k \leq 2^{-(k+2)}$, we see that $\Phi^k$ converge uniformly to some $\Phi \in C(\mathbb{R}^2)$.

Fix $(x,y) \in \mathbb{R}^2 \setminus (U(E))$. By~\ref{omegashrink} there is $k\geq 1$ such that $(x,y) \in \overline{\Omega_{k-1}} \setminus \overline{\Omega_k}$, and hence $\Phi \in C^{\infty}(\mathbb{R}^2 \setminus (U(E))) \cap C(\mathbb{R}^2)$ and $\nabla \Phi = \nabla \Phi^l$ on $\overline{\Omega_{k-1}} \setminus \overline{\Omega_k}$, for all $l \geq k$, by~\eqref{28'}.  Moreover, by~\eqref{29'},
\begin{gather*}
\Phi_y (x,y) = \Phi_y^k (x,y) \geq B_{k-1} - \eta_{k-1} \geq B_0 - 1 \geq 3
\end{gather*}
as required for the second inequality of~\ref{11beta}.  More precisely, by~\eqref{29'} there is $ s \in [0,1]$ such that $\| \nabla \Phi (x,y) - (s e_{k-1} + (1-s) e_k)\|_{2} < 1$.  Using this we see that
\begin{align*}
-\Phi_x (x,y) 
& \leq s A_{k-1} + ( 1- s) A_k + 1 \\
& \leq 3 t_k ( s B_{k-1} + (1 - s) B_k ) + 1 \\
& \leq 3 t_k ( \Phi_y (x,y) + 1) + 1 \\
& \leq 5 t_k \Phi_y (x,y),
\end{align*}
thus $ (-\Phi_x / \Phi_y) (x,y) \leq 5 t_k$.
Similarly
\begin{align*}
-\Phi_x (x,y) 
& \geq s A_{k-1} + (1-s) A_k - 1\\
& \geq 3 t_{k-1}( s B_{k-1} + (1-s)B_k) - 1\\
& \geq 3 t_{k-1} (\Phi_y (x,y)- 1) -1 \\
& \geq t_{k-1} \Phi_y (x,y),
\end{align*}
thus $ (-\Phi_x  / \Phi_y ) (x,y) \geq t_{k-1}$.  Condition~\ref{11beta} follows since $t_{k-1} \geq t_0 = 4$.    We know from~\ref{psi-2} and~\ref{psi-1} that $h_{k-1} \leq \psi(x,y) \leq h_k + 2$.  Thus, by properties of $\omega$,
$$
\Phi_y (x,y)
> B_{k-1} -1 =  3 + 320 \omega' (h_k + 2) \geq 320 \omega'(\psi(x,y)),
$$
as required for~\ref{11gamma}.  

We note that, from the definitions,
\begin{gather*}
10t_k = 10 (3 + 2^{k} ) =   h_{k-1},
\shortintertext{and}
h_k + 2 = 10 ( 3 + 2^{k+1})  + 2 \leq  10 \cdot 2^4 (3 + 2^{k-1}) = 160 t_{k-1}.
\end{gather*}
So  we see that
$$
-2\Phi_x (x,y) / \Phi_y (x,y)  \leq 10 t_k \leq \psi(x,y)  \leq h_k +2 \leq -160 \Phi_x (x,y) / \Phi_y (x,y),
$$
and hence get~\ref{11delta}. 

We finally check~\ref{11epsilon}, so  let $\tilde{u} \in \mathrm{AC}(a,b)$.  The set $\tilde{U}^{-1}(U(E)) \subseteq E$ and is therefore null.  Fix $k \geq 1$, and note that by~\eqref{27'} and since $\epsilon_k \leq 2^{-(k+2)}$, we have that $\| \Phi - \Phi^k\|_{\infty} < 2 \epsilon_k$.  Fix $1 \leq i \leq n_k$.    The image of $\Omega_k^i$ under $\Phi$ is connected, thus
$$
\Phi( \Omega_k^i) \subseteq B_{2 \epsilon_k} ( \Phi^k (\Omega_k^i)),
$$
and hence, since~\eqref{29'} implies that $\Phi^k$ has Lipschitz constant at most $A_k + B_k + 2$,
$$
\lambda (\Phi (\Omega_k^i)) \leq \lambda(B_{2 \epsilon_k} ( \Phi^k(\Omega_k^i))) \leq 4 \epsilon_k + \lambda( \Phi^k (\Omega_k^i)) \leq 4 \epsilon_k + (A_k + B_k + 2) \mathrm{diam}(\Omega_k^i).
$$
So, summing over $1 \leq i \leq n_k$ gives, by the choice of $\epsilon_k$,~\ref{diamomega}, and~\eqref{measvk}, and since the $\{\Omega_k^i\}_{i=1}^{n_k}$ are pairwise disjoint,
\begin{align*}
\lambda (\Phi (\Omega_k))
& \leq \sum_{i=1}^{n_k}\lambda (\Phi (\Omega_k^i))\\
& \leq \sum_{i=1}^{n_k}\left(4 \epsilon_k + (A_k + B_k + 2) \mathrm{diam}(\Omega_k^i)\right)\\
& \leq 2^{-k} + (A_k + B_k + 2) (h_k + 2)\lambda(V_k)\\
& \leq 2^{-(k-1)}.
\end{align*}
Therefore, since for all $k \geq 0$,
$$
(\Phi \circ \tilde{U})(\tilde{U}^{-1}(U(E))) = \Phi (\tilde{U}(a,b)\cap U(E)) \subseteq \Phi (\Omega_k),
$$
we see that $(\Phi \circ \tilde{U})(\tilde{U}^{-1}(U(E))) $ is indeed a null set.
\section{Conclusion}
\begin{proof}[Proof of Theorem~\ref{sing:main}]
We let $L \in C^{\infty}(\mathbb{R}^3)$ be the Lagrangian given by Lemma~\ref{lemma11'}, with $u$ as constructed in Lemma~\ref{uuklemma}, $S = U(E)$, $\psi$ as given by Lemma~\ref{psilemma}, and this potential $\Phi$.  The growth condition on $\psi$ follows from~\ref{psi-2}.  By~\ref{psi-3},  we infer from Lemma~\ref{lemma11'} that the first statement of the theorem holds for this $u \in \mathrm{AC}(a,b)$.

Since $u' \in C^{\infty}([a,b] \setminus E)$ satisfies $u'(x) \to \infty$ as $\mathrm{dist} (x, E) \to 0$, we see that  $E = \{ x \in (a,b) : |u'(x)| = \infty \}$.  In particular, since our function $u$ is a minimizer with respect to its own boundary conditions, we see that the singular set of $u$ is indeed~$E$.

We now prove the third statement of the theorem.  Lemma~\ref{uuklemma} gives us a sequence of admissible functions $u_k \in C^{\infty}([a,b])$ which converge uniformly to $u$.  We just need to prove that they also converge in energy.  Let $ \epsilon > 0$.
By~\ref{u=ukoffvk} we see that
\begin{align*}
0  \leq \mathscr{L}(u_k) - \mathscr{L}(u) & = \int_a^b L(x, u_k(x), u_k'(x)) - L(x, u(x),u'(x))\, dx \\
& = \int_{V_k} L(x, u_k(x), u_k'(x)) - L(x, u(x), u'(x)) \, dx.
\end{align*}
We know from the precise conclusion of Lemma~\ref{lemma11'} that $x \mapsto L(x, u(x), u'(x))$ is integrable, so since $\lambda(V_k) \to 0$ as $ k \to \infty$ by~\eqref{measvk}, we can choose $k_0 \geq 1$ such that
$
\int_{V_{k}} L(x, u(x), u'(x))\, dx < \epsilon /2
$ whenever $k \geq k_0$.

Now, for each $k \geq 1$ and almost every $x \in [a,b]$, we have that
\begin{align*}
L(x, u_k(x), u_k'(x)) & = \omega(u_k'(x)) + F(x,u_k(x), u_k'(x)) \\
& = \omega(u_k'(x)) + \gamma(u_k'(x), \xi (x, u_k (x)), \theta (x, u_k (x)))
\end{align*}
by definition of the Lagrangian $L$ in Lemma~\ref{lemma11'}.  Fix such an $x \in [a,b]$.  We get the following upper bound for $\gamma$ by using~\ref{sing:4.4.e},~\ref{u'offvk},~\eqref{xxix} (noting $\xi \geq 0$ by~\eqref{xibignearS}), and~\ref{11beta}:
\begin{align*}
\gamma (u_k'(x), \xi(x, u_k (x)), \theta(x, u_k(x))) & \leq \theta(x, u_k (x)) | u_k'(x) - \xi(x, u_k (x)) + 1| \\
& \leq \Phi_y (x, u_k(x))(u_k'(x) + 1) + \theta (x, u_k(x))\xi(x, u_k (x)) \\
& \leq \Phi_y(x, u_k (x)) ((h_k + 2) +1) - \Phi_x (x, u_k (x))\\
& \leq -\Phi_x(x, u_k (x))(h_k + 7)/4 \\
& \leq -\Phi_x (x, u_k (x))h_k/2.
\end{align*}
Now,  $\omega(u_k'(x)) \leq \omega(h_k + 2)$ by properties of $\omega$ and~\ref{u'offvk}, and for sufficiently large $k \geq0$, $h_k +2 \leq \omega(h_k + 2)$, since $\omega$ is superlinear and $h_k \to \infty$ as $k \to \infty$.  So, again using~\ref{u'offvk}, and since certainly $-\Phi_x \geq 2$, we have, for large $k \geq 0$,
$$
L(x, u_k(x), u_k'(x))  \leq \omega(h_k + 2) -\Phi_x(x, u_k (x)) h_k/2   \leq -\Phi_x (x, u_k (x))\omega(h_k + 2).
$$
Now, if $ x \in \overline{V_{l-1}} \setminus \overline{V_{l}}$, then $(x, u_k(x)) \in \overline{\Omega_{l-1}} \setminus \overline{\Omega_{l}}$, so $ \Phi_x(x, u_k(x)) = \Phi_x^{l}(x, u_k(x))$, and hence $- \Phi_x (x, u_k(x)) \leq A_{l} + 1$.  Thus for large $l \geq 1$, almost everywhere on $\overline{V_{l-1}} \setminus \overline{V_{l}}$ we have
$$
L(x, u_k(x), u_k'(x)) \leq (A_{l} + 1)\omega(h_k + 2).
$$
So for sufficiently large $k\geq 1$, we have, since $\{h_k\}_{k=0}^{\infty}$ is increasing, by properties of $\omega$, and~\eqref{measvk}, that
\begin{align*}
0 \leq \int_{V_k} L(x, u_k(x), u_k'(x))\, dx & \leq \sum_{l=k + 1}^{\infty} \int_{\overline{V_{l-1}} \setminus \overline{V_{l}}} L(x, u_k(x), u_k'(x))\, dx\\
& \leq \sum_{l=k+1}^{\infty} \int_{\overline{V_{l-1}} \setminus \overline{V_{l}}} (A_{l} + 1) \omega(h_k + 2)\, dx\\
& \leq  \sum_{l=k+1}^{\infty} \omega(h_l + 2) (A_{l} + 1) \lambda(V_{l-1}) \\
& \leq \sum_{l=k}^{\infty} 2^{-l}\\
& \leq 2^{-k +1}.
\end{align*}
So choosing $k_1 \geq 1$ such that $2^{-k_1 + 1} \leq \epsilon / 2$, we have for large $k\geq k_0, k_1$ that
$$
0 \leq \mathscr{L}(u_k) - \mathscr{L}(u)  \leq \int_{V_k} L(x, u_k(x), u_k'(x))\, dx + \int_{V_k}L(x, u(x), u'(x))\, dx  \leq  \epsilon,
$$
as required.
\end{proof}
\def\cprime{$'$}

\end{document}